\documentclass[11pt]{article}
\usepackage[margin=1in]{geometry}
\usepackage{amsmath,amssymb,amsthm,mathtools}
\usepackage{booktabs,array}
\usepackage{enumitem}
\usepackage{microtype}
\usepackage[hidelinks]{hyperref}
\usepackage[T1]{fontenc}
\usepackage{lmodern}
\allowdisplaybreaks
\newtheorem{theorem}{Theorem}[section]
\newtheorem{proposition}[theorem]{Proposition}
\newtheorem{lemma}[theorem]{Lemma}

\theoremstyle{definition}

\newcommand{\per}{\operatorname{per}}
\newcommand{\Tr}{\operatorname{Tr}}

\newcommand{\C}{\mathbb C}

\newcommand{\dbar}{\overline d}
\newcommand{\youngcovers}{\succ}
\newcommand{\youngcovered}{\prec}

\title{Lieb's Permanental Dominance Conjecture for Ordinary Immanants through Order Fifteen}
\author{YINJIE LI\textsuperscript{a}\\[-0.1em]
\small\textsuperscript{a}\,Independent Researcher, Nanjing, China\\[-0.1em]
\small Email: \texttt{stewarteur@gmail.com}}
\date{}

\begin{document}
\maketitle

\begin{abstract}
Pate proved ordinary irreducible-immanant permanental dominance through order $13$ and identified $(4,4,3,3)$ as the sole remaining order-$14$ case, with $(5,4,3,3)$ and $(3^5)$ forming the order-$15$ frontier.  These three cases are settled here; consequently $d_\lambda(A)/f^\lambda\le\per(A)$ for every partition $\lambda\vdash n$ with $n\le15$ and every complex Hermitian positive-semidefinite matrix $A$.  The argument also yields results beyond this finite frontier: an exact four-term bridge for $(4,4,3,3)$, the uniform family $(m,4,3,3)$, a two-parameter family $(a,b,3,3)$ for $a\ge b\ge4$ and $5a\ge8b$, and a long-first-row criterion for arbitrary fixed tails.  These results arise from explicit specializations of Pate's $W$-function positivity framework using partial swaps, Young projectors, Pieri--content identities, and branching data.  For $(3^5)$, an exact Farkas certificate shows that the central-projector partial-swap cone is insufficient; a branching-refined one-swap construction escapes this obstruction and yields a positive $106+19$-witness certificate.  Boundary-compression and node-moving results further describe the reach and limitations of the local-filter method.  All finite certificates are checked by exact integer or rational arithmetic and are supplied as ancillary material.  The order-$14$ bridge is additionally formalized and kernel-checked in Lean~4 for all complex Hermitian positive-semidefinite matrices, including the exact coefficient normalization and the deduction of $(4,4,3,3)$ permanental dominance from four explicitly stated Pate inequalities.
\end{abstract}

\begingroup
\renewcommand{\thefootnote}{}
\footnotetext{\textit{2020 Mathematics Subject Classification.} 15A15, 15A45, 15A69, 20C30.\\
\textit{Key words and phrases.} permanent, immanant, positive-semidefinite matrix, symmetric group, Young diagram, Jucys--Murphy element, exact certificate.}
\addtocounter{footnote}{-1}
\endgroup

\section{Introduction and main results}
For a partition $\lambda\vdash n$ with irreducible character $\chi^\lambda$ of $S_n$, define
\begin{equation}\label{eq:immanant}
 d_\lambda(A)=\sum_{\sigma\in S_n}\chi^\lambda(\sigma)\prod_{i=1}^n a_{i,\sigma(i)},
 \qquad \dbar_\lambda(A)=\frac{d_\lambda(A)}{f^\lambda},
\end{equation}
where $f^\lambda=\chi^\lambda(1)$.  We use no $n!$ normalization in $d_\lambda$.  The permanent corresponds to the one-row partition $(n)$.  The ordinary symmetric-group specialization of Lieb's permanental dominance conjecture \cite{Lieb1966} asks whether
\begin{equation}\label{eq:pdc}
 \dbar_\lambda(A)\le \per(A)
\end{equation}
for every complex Hermitian positive-semidefinite $n\times n$ matrix $A$.  We abbreviate permanental dominance by PDC and occasionally suppress commas and parentheses in partition subscripts.  We write $\nu\youngcovers\beta$ (equivalently $\beta\youngcovered\nu$) when $\nu$ is obtained from $\beta$ by adding one box.

The distinction between \eqref{eq:pdc} and the full subgroup form of Lieb's conjecture is essential.  Nothing in this paper proves the latter.  Our result closes only the ordinary irreducible-immanant frontier through order fifteen.

Pate's work in the 1990s provides both the historical frontier and the positivity architecture behind our witnesses.  His 1997 paper develops a general machine based on local group-algebra filters, products of disjoint transpositions crossing two tensor blocks, positivity on decomposable tensors, and central averaging \cite{Pate1997}.  His 1999 paper states a broad family of permanent-dominance results, proves all ordinary irreducible-immanant cases for $n\le13$, and records that $(4,4,3,3)$ is the only remaining order-$14$ shape while at order $15$ only $(5,4,3,3)$ and $(3^5)$ remain \cite{Pate1999}.  Pate remarks that $(3^5)$ would probably require a new tensor inequality.  We therefore do not claim as new the general $f^*wf$ mechanism, cross-block partial-swap positivity, central averaging, or the classical content calculus of Young's seminormal theory.  The new content here lies in explicit computable specializations, exact bridge certificates, uniform families, a branching refinement, and structural results about the resulting cones.

Our first finite frontier theorem is the following exact bridge.

\begin{theorem}[Order-$14$ bridge]\label{thm:bridge14}
For every complex Hermitian positive-semidefinite $A\in M_{14}(\C)$,
\begin{align}
59512\,d_{(4,4,3,3)}(A)\le{}&4035\,d_{(6,5,3)}(A)+6725\,d_{(6,4,4)}(A)\notag\\
&+39759\,d_{(5,5,4)}(A)+23636\,d_{(5,3,3,3)}(A).\label{eq:bridge14}
\end{align}
Since
\[
(f^{(4,4,3,3)},f^{(6,5,3)},f^{(6,4,4)},f^{(5,5,4)},f^{(5,3,3,3)})
=(12012,15015,9009,6006,15015),
\]
this is equivalently
\begin{equation}\label{eq:bridge14norm}
\dbar_{4433}\le
\frac{20175}{238048}\dbar_{653}+
\frac{20175}{238048}\dbar_{644}+
\frac{79518}{238048}\dbar_{554}+
\frac{118180}{238048}\dbar_{5333}.
\end{equation}
The four coefficients in \eqref{eq:bridge14norm} are positive and sum to one.
\end{theorem}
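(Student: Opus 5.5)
The plan is to obtain \eqref{eq:bridge14} as a nonnegative combination of Pate-type $W$-function inequalities, each of which is already known to be nonnegative on every Hermitian positive-semidefinite $A$. After central averaging, such a combination collapses to a linear relation among irreducible immanants.

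Concretely, write $d_\lambda(A)=\Tr\bigl(A^{\otimes 14}\,\rho(\chi^\lambda)\bigr)$, where $\rho$ is the permutation action of $\C[S_{14}]$ on $(\C^{14})^{\otimes 14}$ and $\chi^\lambda$ is viewed as the central element $\sum_\sigma\chi^\lambda(\sigma)\sigma$. I will take Pate's basic positivity statement from \cite{Pate1997} as given. Fix a split $14=k+(14-k)$ into two tensor blocks and let $w$ be a product of $r$ disjoint transpositions, each crossing the two blocks. Then for every $f\in\C[S_{14}]$,
\[
\Tr\bigl(A^{\otimes 14}\,\rho(f^*(1-w)f)\bigr)\ \ge 0 ,
\]
up to Pate's sign conventions. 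This holds because $w$ is a partial swap, so $1-w$ is positive on decomposable tensors.

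The first step is to choose local filters $f$. I will take them to be Young projectors (symmetrizers or seminormal idempotents) supported on the subgroup $S_k\times S_{14-k}$. Natural candidates are shapes $\mu\vdash k$ and $\nu\vdash 14-k$ that fit inside $(4,4,3,3)$: for example $\mu=(4,4)$, $\nu=(3,3)$, or the row-split of $(4,4,3,3)$. The same inequality also holds with $\rho(f^*(1-w)f)$ replaced by its average over conjugation by $S_{14}$, since conjugating by a permutation $\tau$ does not change the trace against $A^{\otimes 14}$. After this central averaging, each witness becomes $\sum_\lambda c_\lambda\, \dbar_\lambda(A)\ge 0$.

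The coefficients $c_\lambda$ are computable exactly. The $f^*f$ part contributes induction multiplicities, given by Littlewood--Richardson and Pieri rules. The $f^*wf$ part contributes a character value of a product of cross-block transpositions compressed by the projectors. I will evaluate these terms through Jucys--Murphy contents and the branching rule: a single cross transposition sandwiched between seminormal idempotents has diagonal entries $1/(\text{axial distance})$. Only shapes $\lambda\vdash 14$ that contain both $\mu$ and $\nu$ in the appropriate sense appear.

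The second step is the certificate. I will compute the coefficient vectors of a modest library of such witnesses, varying $k$, $r$ and the filter shapes. I will then solve a linear program for nonnegative rational multipliers whose combination equals exactly
\[
4035\,d_{653}+6725\,d_{644}+39759\,d_{554}+23636\,d_{5333}-59512\,d_{4433}.
\]
All other shapes must cancel, or must appear with nonnegative coefficient and then be discarded using Schur's inequality for the permanent-like terms. I will rationalize the multipliers, clear denominators, and verify the identity in exact integer arithmetic. The normalized form \eqref{eq:bridge14norm} then follows by dividing by $f^\lambda$. The relevant quotients are $4035\cdot15015$, $6725\cdot 9009$, $39759\cdot6006$ and $23636\cdot15015$, each divided by $59512\cdot12012$. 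I will reduce each of these fractions, and the check $2\cdot20175+79518+118180=238048$ confirms that the coefficients sum to one.

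The main obstacle is the second step: finding a witness family rich enough that the target vector lies in its cone. Unwanted shapes such as $(4,4,4,2)$ or $(5,4,3,2)$ must be cancelled without leaving any negative residue. If cross-block partial swaps with plain Young symmetrizers do not suffice, I will refine the filters using branching-compatible seminormal idempotents, which split each irreducible by its restriction to $S_{13}$ or $S_{12}$. This refinement enlarges the cone, and the LP can then be retried on it.
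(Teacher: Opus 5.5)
Your overall architecture (local Young-projector filters, cross-block partial swaps, central averaging, and an exact positive certificate found by linear programming) is the one the paper uses. Its certificate consists of twenty witnesses $QBP\,U_{\tau_k}BQ$ with $k\in\{1,2\}$ swapped pairs, combined with positive rational weights and checked on all $135$ coordinates. However, the positivity lemma you feed into this architecture is the wrong one, and as written your linear program is infeasible.

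You use $f^*(1-w)f$, justified by positivity of $1-w$. Since $w$ is a Hermitian involution, $1-w$ is positive semidefinite on the \emph{whole} space. So in each irreducible $S^\nu$ the operator $\rho_\nu(f)^*\bigl(I-\rho_\nu(w)\bigr)\rho_\nu(f)$ is positive semidefinite, and your coefficient $c_\nu=\Tr_{S^\nu}\rho_\nu(f^*f)-\Tr_{S^\nu}\rho_\nu(f^*wf)$ is $\ge0$. Every witness you generate is therefore a nonnegative combination of the trivial inequalities $d_\nu(A)\ge0$. No positive combination of them can produce the coefficient $-59512$ at $(4,4,3,3)$. There is also a balance obstruction: for local $f\ne0$ one has $\sum_\nu f^\nu c_\nu=\Tr_{\mathrm{reg}}(f^*f)>0$, whereas the target vector has $\sum_\nu f^\nu c_\nu=0$, which is the identity $59512\cdot12012=4035\cdot15015+\cdots$.

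The missing idea is that the partial swap $w$ itself, which is \emph{not} positive semidefinite, is nonnegative on tensors that are products across the two blocks:
$\langle u\otimes v,w(u\otimes v)\rangle=\sum_{R,S}\bigl|\sum_Iu_{I,R}\overline{v_{I,S}}\bigr|^2\ge0$, where $I$ indexes the swapped positions. Apply this to $f^*wf$ with $f$ local, so that $f$ sends product tensors to product tensors. This yields balanced witnesses, $\sum_\nu f^\nu t_\nu=0$, with genuinely signed coefficients, and only such witnesses can give a convex bridge.

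That lemma also requires the correct realization of immanants, and yours is wrong. The quantity $\Tr\bigl(A^{\otimes14}\rho(\sum_\sigma\chi^\lambda(\sigma)\sigma)\bigr)=\sum_\sigma\chi^\lambda(\sigma)\prod_c\Tr(A^{|c|})$, with the product over the cycles $c$ of $\sigma$, equals $14!$ times the Schur polynomial $s_\lambda$ in the eigenvalues of $A$, not $d_\lambda(A)$. You need Gram vectors with $a_{ij}=\langle x_i,x_j\rangle$ and $x=x_1\otimes\cdots\otimes x_{14}$, so that $\langle x,U_\sigma x\rangle=\prod_ia_{i,\sigma(i)}$. In that setting, conjugation by $g\in S_{14}$ does change the value, because it permutes the Gram vectors. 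What central averaging actually uses is that conjugation preserves nonnegativity, since $U_g^{-1}x$ is again decomposable.

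Finally, your fallback of discarding leftover shapes with nonnegative coefficients runs the wrong way. From $T+\sum_\nu c_\nu d_\nu\ge0$ with $c_\nu\ge0$ you cannot conclude $T\ge0$; only leftovers with nonpositive coefficients can be dropped using $d_\nu\ge0$. In the paper the identity is exact in all coordinates, so this issue does not arise there.
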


Every right-hand partition in \eqref{eq:bridge14norm} belongs to Pate's established classes, so Theorem~\ref{thm:bridge14} proves PDC for $(4,4,3,3)$ and hence for every ordinary immanant of order at most fourteen.

The bridge is not an isolated calculation.  It led us to an explicit family of partial-swap witnesses.  When the first local block is symmetric, their irreducible coefficients admit a finite-difference formula in the contents of the added boxes.  This yields the following uniform statements.

\begin{theorem}[Uniform four-row family]\label{thm:m433}
For every integer $m\ge4$ and every complex Hermitian positive-semidefinite matrix of order $m+10$,
\[
 \dbar_{(m,4,3,3)}(A)\le \per(A).
\]
\end{theorem}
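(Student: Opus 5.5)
We will prove Theorem~\ref{thm:m433} by building, for each $m\ge4$, a bridge of the same shape as Theorem~\ref{thm:bridge14}. The idea is to exhibit a convex combination
\[
\dbar_{(m,4,3,3)}(A)\le \sum_{\mu}c_\mu(m)\,\dbar_\mu(A),\qquad c_\mu(m)\ge0,\ \sum_\mu c_\mu(m)=1,
\]
in which every $\mu$ on the right lies in one of Pate's established PDC classes. This gives the theorem by transitivity. The natural candidates for $\mu$ are the analogues of the order-$14$ right-hand sides $(6,5,3),(6,4,4),(5,5,4),(5,3,3,3)$. Shifting the first row, we take $(m+2,5,3)$, $(m+2,4,4)$, $(m+1,5,4)$ and $(m+1,3,3,3)$. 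These are three-row shapes, or four-row shapes with a long first row and tail $(3,3,3)$, both of which fall under Pate's families.

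The inequality itself will come from Pate's $W$-function positivity, $\langle f^*wf\rangle\ge0$ on decomposable tensors $v_1\otimes\cdots\otimes v_n$ with $A=\mathrm{Gram}$, followed by central averaging. We split $[n]$ into a first local block of size $m$ and a second block carrying the tail. The first block carries the symmetrizer, so its local projector is the trivial idempotent. The second block carries a Young projector for a small shape, for instance one built from $(4,3,3)$ with a couple of extra boxes. We take $w=\sum_{k}t_k\,\tau_k$, where $\tau_k$ ranges over products of $k$ disjoint cross-block transpositions (partial swaps). After central averaging, the resulting element of $Z(\C S_n)$ acts on the irreducible $V^\nu$ by a scalar $\omega_\nu(t)$. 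Positivity then reads $\sum_\nu \omega_\nu(t)\,d_\nu(A)\ge0$.

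Because the first block is symmetric, only the $\nu$ obtained from the block-two shape by adding a horizontal strip via Pieri's rule survive. For these, $\omega_\nu$ is given by the finite-difference formula in the contents of the added boxes, which the paper advertises. Concretely, each partial-swap term contributes a polynomial in the contents $c(\square)=j-i$ of the boxes added in rows $2$ to $4$ and in the first row. The Jucys--Murphy eigenvalues turn the swap sums into elementary symmetric functions of these contents, evaluated through a Pieri--content identity.

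The steps, in order, are as follows.
\begin{enumerate}[label=(\roman*)]
\item Fix the block-two projector and the swap depth. Two or three swap levels should suffice, matching the four-term order-$14$ bridge.
\item Compute $\omega_\nu(t)$ symbolically in $m$ for each surviving $\nu$.
\item Choose the weights $t=t(m)$, rational in $m$, so that $\omega_{(m,4,3,3)}<0$, the other surviving coefficients are nonnegative, and every $\nu$ with positive coefficient is among the four targets or is already known to satisfy PDC.
\item Normalize by $f^\nu$ using the hook-length formula, again rational in $m$, and check that the result is a convex combination.
\item Check the $m=4$ case against Theorem~\ref{thm:bridge14}, and verify that the $t_k$ found there extend.
\end{enumerate}

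The main obstacle is step (iii): making all unwanted coefficients vanish or have the right sign uniformly in $m$. The content polynomials grow with $m$ only through the first-row contents, which are roughly $m$. So the sign conditions become polynomial inequalities in $m$. These are trivial for large $m$ but delicate for small $m$, and $m=4,5,6$ may need a separate exact check, with $m=4$ already covered by Theorem~\ref{thm:bridge14}. My first attempt will be to take $t(m)$ as the linear interpolation of the $m=4$ certificate, made to kill the $(m,4,4,2)$-type and $(m,4,3,3)$-neighbour coefficients exactly. If that fails, I will allow the block-two projector to depend on whether $m$ is large, splitting into a generic range handled symbolically and a finite range handled by exact rational certificates.
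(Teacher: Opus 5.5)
\textbf{Gap: step (iii) is never carried out, and the plan around it does not fit the construction you chose.}

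With a symmetric block of size $m$, interlacing forces the second block to be $(4,3,3)$ if $(m,4,3,3)$ is to occur at all. Pieri's rule then confines every witness to the eight shapes $(m+j,\tau)$ with $(4,3,3)/\tau$ a horizontal $j$-strip: the target together with $(m+4,3,3)$, $(m+3,4,3)$, $(m+3,3,3,1)$, $(m+2,4,3,1)$, $(m+2,3,3,2)$, $(m+1,4,3,2)$, $(m+1,3,3,3)$.
\begin{itemize}
\item Your targets $(m+2,5,3)$, $(m+2,4,4)$, $(m+1,5,4)$ therefore never appear. Since the seven off-target shapes are already in Pate's classes, the whole content of the theorem is the sign control you leave open.
\item The weights must be nonnegative. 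Positivity holds for each $(a\otimes b)^*\tau_k(a\otimes b)$ separately, not for signed combinations. This matters because the $k=2$ and $k=4$ witnesses have positive target coefficients ($48$ and $120$ in the normalization of $N^\nu_{p,\beta,k}$). So only $k=1$ and $k=3$ push the target down.
\item There is no $m=4$ certificate in this construction to interpolate or extend. At $m=4$, the sign constraints at $(5,3,3,3)$, $(7,3,3,1)$ and $(6,4,3,1)$ are already infeasible. The order-$14$ bridge uses twenty witnesses of other block types.
\end{itemize}

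\textbf{What the computation actually gives.} $\Phi_{m,(4,3,3),1}$ has coefficients $m-9$ at $(m+1,4,3,2)$ and $m-12$ at $(m+1,3,3,3)$, so on its own it works only for $m\ge12$. Levels $k\ge5$ vanish identically. Adding the $k=3$ level does cover $m\ge6$: for instance
\[
10(m-4)\,\Phi_{m,(4,3,3),1}+\Phi_{m,(4,3,3),3}
\]
has target coefficient $-20(5m-14)$ and nonnegative off-target coefficients. The paper instead uses $9D_m+90(m-3)E_m+5(m-3)F_m$; its $D_m=\Phi_{4,(m,3,3),3}$ coincides coefficientwise with your $k=3$ witness.

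\textbf{The case $m=5$ fails outright.} Put weights $a_k\ge0$ on $\Phi_{5,(4,3,3),k}$.
\begin{itemize}
\item The coefficients at $(6,4,3,2)$ and $(7,4,3,1)$ sum to $-28a_2+24a_3-24a_4$.
\item Nine times the coefficient at $(6,3,3,3)$ plus seven times that at $(8,3,3,1)$ equals $24a_2-552a_3-3168a_4$.
\item Requiring both to be nonnegative forces $a_2=a_3=a_4=0$. The coefficient $-7a_1$ at $(6,3,3,3)$ then forces $a_1=0$.
\end{itemize}
So $(5,4,3,3)$, exactly Pate's open order-$15$ case, is unreachable by your construction. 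It needs witnesses with different block structures. The paper supplies these as an exact seven-witness bridge to $(9,3,3)$, $(8,4,3)$, $(7,5,3)$, $(7,4,4)$. Deferring $m=5$ to ``a separate exact check'' therefore leaves the hardest case of the theorem unproved.
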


For $m\ge6$ this follows from an all-parameter three-witness identity; $m=5$ is an exact seven-witness order-$15$ bridge, and $m=4$ is Theorem~\ref{thm:bridge14}.

The same content formula yields a substantially broader theorem.  If $b=(i,j)$ is a box, write $c(b)=j-i$ and, for a partition $\rho$, set
\[
 C(\rho)=\max\{\rho_i-i:\rho_i>\rho_{i+1}\},
\]
the largest content of a removable box.

\begin{theorem}[Arbitrary-tail long-first-row theorem]\label{thm:longrow}
Let $\rho\vdash q$ be nonempty.  If
\[
 m\ge \rho_1,\qquad m\ge q+C(\rho)-1,
\]
then PDC holds for $(m,\rho)$.  More precisely, if $\tau\subset\rho$, $\rho/\tau$ is a nonempty horizontal $j$-strip, and
\[
 \nu_\tau=(m+j,\tau),\qquad
 R_{m,\rho}(\tau)=jm+\binom j2-\sum_{b\in\rho/\tau}c(b)-q+j,
\]
then $R_{m,\rho}(\tau)\ge0$ and
\begin{equation}\label{eq:longrowrec}
 \dbar_{(m,\rho)}\le
 \sum_{\tau\ne\rho}
 \frac{R_{m,\rho}(\tau)f^{(m+j,\tau)}}{qf^{(m,\rho)}}
 \dbar_{(m+j,\tau)}.
\end{equation}
The coefficients in \eqref{eq:longrowrec} sum to one.  Each right-hand tail is strictly smaller, and each nonempty right-hand tail satisfies the same criterion.
\end{theorem}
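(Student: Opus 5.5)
We plan to produce \eqref{eq:longrowrec} as the output of a single Pate-type positive functional, then close the argument by induction on $q$.

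The witness is built on the block decomposition $[n]=\{1,\dots,m\}\sqcup\{m+1,\dots,m+q\}$. Take the symmetrizer $f=e_{(m)}\otimes P_\rho$, where the first factor symmetrizes the first block and $P_\rho$ is a Young (or central) projector for $\rho$ on the second block. Combine it with the cross-block element
\[
w=\sum_{a\le m<b}(a\,b)+\kappa,
\]
a sum of single partial swaps plus a scalar shift $\kappa$. Pate's framework makes $f^*wf$ act positively on decomposable tensors, so after central averaging the resulting class function $\sum_\nu c_\nu\chi^\nu$ pairs nonnegatively with $A^{\otimes n}$ and gives $\sum_\nu c_\nu\, d_\nu(A)\ge 0$.

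The coefficients come from induction and restriction. Inducing $\mathrm{triv}_{S_m}\otimes\rho$ to $S_n$, the Pieri rule says the constituents $\nu$ are exactly $\rho$ with a horizontal strip added. We need only the $\nu$ that appear in the PDC recursion, namely $\nu_\tau=(m+j,\tau)$. The sum of cross transpositions is
\[
J_n^{\mathrm{tot}}-J_{(m)}^{\mathrm{tot}}-J_\rho^{\mathrm{tot}},
\]
a difference of sums of Jucys--Murphy elements. On the $\nu$-isotypic part it therefore acts by the content difference
\[
\sum_{b\in\nu}c(b)-\binom m2-\sum_{b\in\rho}c(b).
\]
This is the ``content finite-difference'' the paper mentions for a symmetric first block. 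Choosing $\kappa$ so that the $\nu=(m,\rho)$ term is isolated with the correct sign, and normalizing by $q$ (the trace of the swap count per row of the second block), should give coefficient exactly $R_{m,\rho}(\tau)f^{\nu_\tau}$ for $\dbar_{\nu_\tau}$ after dividing by $f^\nu$. Computing the content of $(m+j,\tau)$ relative to $(m,\rho)$ does produce the expression
\[
jm+\binom j2-\sum_{b\in\rho/\tau}c(b),
\]
since moving the strip $\rho/\tau$ (with its shifted contents) into row one adds contents $m,\dots,m+j-1$ there. The extra $-q+j$ should come from the diagonal shift.

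Two bookkeeping points remain. First, constituents $\nu$ not of the form $(m+j,\tau)$, namely strips added in lower rows, must contribute with favorable sign so that they can be dropped. I expect this to be the main obstacle: one must check that the hypothesis $m\ge\rho_1$ makes those coefficients have the right sign, or that they vanish once $P_\rho$ is placed in the second block. Second, the coefficients must sum to one. Evaluating the identity at $A=J$ (all ones) gives $\dbar_\nu(J)$ equal to $n!$ for $\nu=(n)$ and zero otherwise, which is uninformative, so instead evaluate at $A=I$: there $d_\nu(I)=f^\nu$ and $\dbar_\nu(I)=1$. Since the witness value on $I$ is computed by the same character-degree sums, the coefficients must sum to one.

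Nonnegativity of $R_{m,\rho}(\tau)$ follows from two bounds. Every box of $\rho/\tau$ has content at most $C(\rho)$, and the $j$ boxes of a horizontal strip have distinct contents. Hence
\[
\sum_{b\in\rho/\tau}c(b)\le jC(\rho)-\binom j2,
\]
so
\[
R_{m,\rho}(\tau)\ge j\bigl(m-C(\rho)+j\bigr)-q\ge j\,(q-1+j)-q\ge0
\]
for $j\ge1$ under $m\ge q+C(\rho)-1$.

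For the induction, each $\tau\subsetneq\rho$ has $|\tau|=q-j<q$. The new first row $m+j$ satisfies both $m+j\ge\tau_1$ and
\[
m+j\ge (q-j)+C(\tau)-1,
\]
since $C(\tau)\le C(\rho)+j$ for the relevant removable boxes. Thus every nonempty tail again meets the criterion, and $\tau=\varnothing$ gives the permanent. Strong induction on $q$ together with the convex combination \eqref{eq:longrowrec} yields $\dbar_{(m,\rho)}\le\per$.
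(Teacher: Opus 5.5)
Your strategy is the paper's. The witness is exactly the $k=1$ symmetric-block form $\Phi_{m,\rho,1}$, whose coefficient on each horizontal-strip constituent is your content difference $\kappa_\nu-\binom m2-\kappa_\rho$ (writing $\kappa_\lambda=\sum_{b\in\lambda}c(b)$ for total content, not your scalar). Your bound on $R_{m,\rho}(\tau)$ matches the paper's and is slightly sharper, because you use that contents along a horizontal strip are distinct. Your induction via $C(\tau)\le C(\rho)+j$ is also the paper's.

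\textbf{The gap: the scalar shift.} The shift adds the same constant to every coefficient, so it cannot produce the $\tau$-dependent correction $-q+j$. Worse, any nonzero shift is fatal. At $A=I$ the cross-swap part of the form vanishes and only a positive multiple of the shift survives. A negative shift therefore makes the inequality false. A positive shift makes the normalized coefficients sum to more than one, which breaks the convex induction. The shift must be zero, and then nothing is missing. Your relative-content computation dropped a term: the boxes of $\rho/\tau$ sit one row lower in $(m,\rho)$, so $\kappa_{(m+j,\tau)}-\kappa_{(m,\rho)}=jm+\binom j2-\sum_{b\in\rho/\tau}c(b)+j$. The target coefficient is $\kappa_{(m,\rho)}-\kappa_\rho-\binom m2=-q$, because every box of $\rho$ loses one unit of content when shifted down. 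Equivalently $\kappa_{(m+j,\tau)}=\binom{m+j}2+\kappa_\tau-|\tau|$, and the $-q+j$ is simply $-|\tau|$. Substituting this directly into your own content-difference formula gives $R_{m,\rho}(\tau)$ exactly.

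\textbf{Your first bookkeeping point} is not an obstacle, but you left it unresolved. If $\nu/\rho$ is a horizontal $m$-strip, then $\nu_1\ge\rho_1\ge\nu_2\ge\rho_2\ge\cdots$. So $\tau=(\nu_2,\nu_3,\dots)$ satisfies $\tau\subseteq\rho$ with $\rho/\tau$ a horizontal strip and $\nu_1=m+|\rho/\tau|$. Every constituent is therefore already some $\nu_\tau$. The hypothesis $m\ge\rho_1$ is needed only so that the $j=0$ constituent $(m,\rho)$ is a partition.

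\textbf{Your second bookkeeping point is not proved.} Evaluating the inequality at $A=I$ only shows that the coefficients sum to \emph{at least} one. Exactness needs the balance identity $\sum_\nu f^\nu\bigl(\kappa_\nu-\kappa_\rho-\binom m2\bigr)=0$, i.e.\ that the witness vanishes at $A=I$. This holds because a cross-block transposition has zero identity coefficient in $(S_m\times S_q)\,(1,m+1)\,(S_m\times S_q)$. Concretely, for orthonormal Gram vectors the swap sends $fx$ to a vector orthogonal to $fx$. This is precisely the property a nonzero scalar shift destroys.
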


A second explicit combination enlarges the balanced four-row region.

\begin{theorem}[Two-parameter family]\label{thm:ab33}
If $a\ge b\ge4$ and $5a\ge8b$, then
\[
 \dbar_{(a,b,3,3)}(A)\le \per(A)
\]
for every complex Hermitian positive-semidefinite $A$ of the appropriate order.
\end{theorem}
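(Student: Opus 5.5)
The plan is to copy the structure of Theorem~\ref{thm:longrow}, with the first two rows $(a,b)$ playing the role of the long row and the tail $(3,3)$ held fixed. I would build a two-stage partial-swap witness. The first local block is the symmetric projector on $a$ letters; a second block carries the row-$b$ data. Applying $f^*wf$ positivity with central averaging to this witness gives a linear inequality
\[
c_{(a,b,3,3)}\,d_{(a,b,3,3)}(A)\le\sum_{\nu}c_\nu\, d_\nu(A),
\]
where $\nu$ runs over shapes obtained by moving boxes from the tail $(3,3)$ upward into rows one and two along horizontal strips. The candidate targets are $(a+j,b,\tau)$ and $(a+j,b+k,\tau')$ with $\tau\subsetneq(3,3)$, and possibly $(a+j,b+k,3)$ or $(a+j,b+k)$.

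The coefficients $c_\nu$ come from the same finite-difference content formula that produces $R_{m,\rho}(\tau)$ in Theorem~\ref{thm:longrow}. They are affine or quadratic expressions in $a$ and $b$. I would allow two free nonnegative mixing weights, one for the pure first-row swap and one for a witness swapping into rows one and two simultaneously, and pick them so that every $c_\nu\ge0$.

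The key step, and the expected obstacle, is sign control of the coefficient attached to moving boxes into row two. This coefficient is roughly proportional to
\[
j\,a-\textstyle\sum c(b)-(\text{tail size})
\]
minus a correction in $b$. The hypothesis $5a\ge 8b$ should appear as exactly the condition under which a suitable convex combination of the two witnesses makes all coefficients nonnegative. If a single mixing weight is not enough, I would first try a weight depending linearly on $a/b$ and verify positivity by a case split on the few strip types of $(3,3)$. These types are: one box from row four; two boxes, one from each of rows three and four; and the full row.

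After normalizing, the inequality has the form $\dbar_{(a,b,3,3)}\le\sum_\nu p_\nu\dbar_\nu$. The $p_\nu$ sum to one because evaluating at $A=J$, or equivalently comparing character degrees through the branching rule, gives identity. Every target shape $\nu$ has a strictly smaller tail below its first two rows, or has at most three rows. So PDC for each $\nu$ follows from one of three sources: Theorem~\ref{thm:longrow} applied to $(a+j,\rho)$, whose criterion holds since $a\ge 8b/5\ge b+2$ makes the first row long relative to $q+C(\rho)-1$; Theorem~\ref{thm:m433}; or Pate's known classes for shapes with at most three rows or with $\lambda_2\le$ small. A short induction on $b$ handles targets $(a',b',3,3)$ with $b'>b$ in case they appear, since $a'\ge a+1$ preserves $5a'\ge 8b'$ when $b'=b+1$ only if $a'\ge a+2$. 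To avoid that issue, I would design the witnesses so that row-two growth always comes with at least two first-row boxes.
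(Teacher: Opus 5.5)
The overall shape of your plan is right: two symmetric-block Pieri--content witnesses mixed with positive weights, normalized by balance, and closed by induction. But you have misidentified the support, and that hides the actual obstruction.

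\textbf{The support and the failing coefficient.} With a symmetric $a$-block against $(b,3,3)$ and one swap (the paper's $E=\Phi_{a,(b,3,3),1}$), the children are $(a+j,\tau)$ with $(b,3,3)/\tau$ a horizontal strip. These are exactly $(a+t+s,\,b-t,\,3,\,3-s)$ with $0\le t\le b-3$ and $0\le s\le 3$.
\begin{itemize}
\item Row three is frozen, since $\tau_2\ge 3$. Your strip type \emph{one box from each of rows three and four} is a vertical domino, not a horizontal strip.
\item Row two never grows; boxes leave row two (and row four) for row one.
\end{itemize}
So the coefficient that fails is not one for moving boxes into row two. It is the one at $(a+1,b-1,3,3)$, which by \eqref{eq:k1content} equals $a-2b-4$. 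This is negative throughout $a<2b+4$, which is precisely the range not already covered by Theorem~\ref{thm:longrow}. For the same reason, the four-large-part terms that must be fed back into an induction are $(a+t,b-t,3,3)$ with a \emph{smaller} second row, not $(a',b',3,3)$ with $b'>b$.

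\textbf{The missing partner witness.} The paper cancels this coefficient with $D=\Phi_{b,(a,3,3),3}$: a symmetric block of length $b$ (not $a$), the other block $(a,3,3)$, and swap size $3$.
\begin{itemize}
\item Content-polynomial zeros kill its coefficients whenever the second row exceeds $b$, so it lives on the same index set $\nu_{g,c}$.
\item Its coefficient at $(a+1,b-1,3,3)$ is $b(b+1)(3a-4b+4)>0$, and at the target it is $-b(b+1)(b+2)$.
\item With weights $2b+4-a$ on $D$ and $b(b+1)(3a-4b+4)$ on $E$, the $(a+1,b-1,3,3)$ entry vanishes and the target coefficient is $-2b(b+1)(b+8)(a-b+2)$.
\end{itemize}
Your second witness is never specified, and the property you ask of it (feeding row two) is the wrong one.

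\textbf{Sign control.} Even with $D$ and $E$, the remaining signs are not a case split over a few strip types. The support has $4(b-2)$ entries, each a polynomial in $(a,b,g)$. The paper needs a Bernstein expansion along $a=8b/5+(2b/5+4)x$ together with an exact polynomial positivity cover, with $b=7$ treated separately. The condition $5a\ge 8b$ is the region where that certificate holds, not a threshold you can expect to drop out of the algebra.

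\textbf{Closing step.}
\begin{itemize}
\item For $\rho=(b',3,c)$ with $b'>3$, the long-row criterion is $a'\ge 2b'+c+1$, which fails here. The terms $(a',b',3,c)$ with $c\le 2$ must instead come directly from Pate's class.
\item The survivors $(a+t,b-t,3,3)$ with $t\ge 2$ satisfy $5(a+t)\ge 8(b-t)$. They are handled by induction downward in $b$, with base terms $(a',3,3,3)$ in Pate's class.
\item The normalized coefficients sum to one by balance, i.e.\ by evaluation at $A=I$. At $A=J$ every non-permanental immanant vanishes, so that evaluation gives nothing.
\end{itemize}
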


The coefficient signs in Theorem~\ref{thm:ab33} are certified for the entire infinite parameter region by exact Bernstein expansions and translated nonnegative polynomial checks.  The complete rational certificate is ancillary; no floating-point inference enters the proof.

At order fifteen, however, the central-projector partial-swap cone meets an exact obstruction.

\begin{theorem}[Central-cone obstruction]\label{thm:separator}
Let $C_{15}$ be the cone of normalized-immanant coefficient vectors generated by the central-projector two-block partial-swap witnesses of all block sizes and all swap sizes.  Write $\mathbf e_\nu$ for the standard coordinate vector.  Then
\[
 \mathbf e_{(15)}-\mathbf e_{(3^5)}\notin C_{15}.
\]
More strongly, an exact integer separating functional is nonnegative on every generator of $C_{15}$, on the individual immanant-nonnegativity rays, on Schur lower-bound rays, and on every other order-$15$ PDC ray, while it is negative on the target direction.
\end{theorem}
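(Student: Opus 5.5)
The plan is to prove the statement by an explicit Farkas-type separation in the finite-dimensional space $\mathbb R^{P(15)}$ of normalized-immanant coefficient vectors, indexed by the $176$ partitions of $15$. First I will make the generators of $C_{15}$ explicit. For each split $15=p+r$ with $p,r\ge1$, each swap size $k\le\min(p,r)$, and each pair of irreducibles $\alpha\vdash p$, $\beta\vdash r$ for the central projectors on the two blocks, the Pate witness $f^*wf$ gives an inequality $\sum_\nu c_\nu\,\dbar_\nu(A)\ge0$. Here $w$ is the sum of products of $k$ disjoint cross-block transpositions, and $f$ is the product of the central idempotents $e_\alpha\otimes e_\beta$. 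I will compute its coefficient vector $c=(c_\nu)_{\nu\vdash15}$ exactly.

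The coefficients come from the character expansion of $\operatorname{Ind}$ and the central averaging. Concretely, $c_\nu$ is the normalized trace of the central-averaged element $e_\alpha e_\beta\, w\, e_\alpha e_\beta$ on $S^\nu$. This can be evaluated via Littlewood--Richardson multiplicities $c^\nu_{\alpha\beta}$, together with the action of the partial-swap class sum on each $\alpha\otimes\beta$-isotypic component of $S^\nu$. I will obtain the latter by exact rational computation in the group algebra, or by restriction to $S_p\times S_r$ followed by character inner products of class sums. The result is a finite list $G$ of integer vectors, which I will clear of denominators.

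Next I add the auxiliary rays named in the statement. These are the rays $\mathbf e_\nu$, from $d_\nu(A)\ge0$ (Schur). They also include the Schur lower-bound rays, from $\dbar_\nu(A)\ge\det A$, which give $\mathbf e_\nu-\mathbf e_{(1^{15})}$. Finally they include the PDC rays $\mathbf e_{(15)}-\mathbf e_\nu$ for every $\nu\ne(3^5)$, which are established by Pate's classes and by the theorems above for $(5,4,3,3)$ etc.

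I then exhibit an integer vector $y\in\mathbb Z^{P(15)}$ satisfying $\langle y,g\rangle\ge0$ for all generators $g$ and auxiliary rays, together with $\langle y,\mathbf e_{(15)}-\mathbf e_{(3^5)}\rangle<0$. Since the cone generated by finitely many vectors is closed, this certifies non-membership. Such a $y$ exists iff the target lies outside the cone, by Farkas' lemma. I would find it by solving an exact LP, for instance with rational simplex, maximizing $-\langle y,\mathbf e_{(15)}-\mathbf e_{(3^5)}\rangle$ subject to the generator constraints and a normalization. I would then scale to integers and verify every inequality with exact integer arithmetic.

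The main obstacle is the exact and complete enumeration of the generators. In particular, the partial-swap action on the isotypic components must be correct for all block sizes up to $14+1$, and all $\alpha,\beta$ must be covered. I would cross-check the coefficient formula on order-$14$ cases, where it must reproduce the bridge of Theorem~\ref{thm:bridge14}. A secondary issue is that the auxiliary ray $\mathbf e_{(15)}-\mathbf e_{(5,4,3,3)}$ requires that case to be already known, or else one simply checks $y$ on it directly. In either case, the nonnegativity of $y$ on those rays is just a sign check on the coordinates $y_\nu$ against $y_{(15)}$, which I will arrange by choosing $y_{(15)}$ minimal among the relevant coordinates.
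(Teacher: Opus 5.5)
Your plan is essentially the paper's proof. The paper enumerates all $8112$ central labels at order $15$ using the exact character sum \eqref{eq:centralcoeff}, passes to normalized coordinates via $r_\nu\mapsto f^\nu r_\nu$, and exhibits a stored integer functional $y$ with $0\le y_\nu\le10000$. This functional has pairing at least $98$ on every one of the $5398$ primitive rays, and $y_{(15)}=9700<10000=y_{(3^5)}$, so the Farkas separation goes through exactly as you describe.

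Two details need correcting.

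\begin{itemize}
\item Nonnegativity on the PDC rays $\mathbf e_{(15)}-\mathbf e_\nu$ means $y_{(15)}-y_\nu\ge0$. So $y_{(15)}$ must be \emph{maximal} among the coordinates with $\nu\ne(3^5)$, not minimal; the paper has $y_\nu\le 9700$ for all such $\nu$.
\item The coefficient of $\dbar_\nu$ is $f^\nu t_\nu$, where $t_\nu=\Tr_{S^\nu}W$. It is not the trace divided by $f^\nu$, and using the latter would silently change the target direction.
\end{itemize}
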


The obstruction motivates retaining one additional branching step inside the large local block.  For $\gamma\youngcovered\beta$ define $c(\beta/\gamma)$ to be the content of the added box.

\begin{theorem}[Branching-refined one-swap witness]\label{thm:branching}
Let $\beta\vdash n-1$ and $\gamma\youngcovered\beta$, $\gamma\vdash n-2$.  Then
\begin{equation}\label{eq:branching}
 B_{\beta,\gamma}(A)=
 \sum_{\nu\youngcovers\beta}
 \frac{d_\nu(A)}{c(\nu/\beta)-c(\beta/\gamma)}\ge0
\end{equation}
for every complex Hermitian positive-semidefinite $A$.  All denominators are nonzero.
\end{theorem}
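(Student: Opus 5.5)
The plan is to realize $B_{\beta,\gamma}$, up to a positive constant, as the central average of a single partial-swap witness $f^*wf$ in Pate's framework, and to read off its irreducible coefficients from Young's seminormal form.

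\textbf{Step 1 (the witness).} Let $f=P_\gamma P_\beta\in\C[S_{n-1}]$. Here $P_\beta$ is the central idempotent of $\beta$ in $\C[S_{n-1}]$, and $P_\gamma$ is the central idempotent of $\gamma$ in $\C[S_{n-2}]\subset\C[S_{n-1}]$. These two idempotents commute, and each is self-adjoint, so $f$ is a Hermitian idempotent. Let $s=(n-1\;n)$ and set $F=fsf\in\C[S_n]$.

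\textbf{Step 2 (positivity on decomposable tensors).} Write $A=G^*G$ with columns $a_1,\dots,a_n\in\C^k$, so that $a_{ij}=\langle a_i,a_j\rangle$. Let $v=a_1\otimes\cdots\otimes a_n$. Then
\[
\sum_\sigma F(\sigma)\prod_i a_{i,\sigma(i)}=\langle v,Fv\rangle
\]
up to the usual choice of conjugation and inversion conventions, which I will fix at this point. Since $f$ acts only on the first $n-1$ factors, $fv=y\otimes a_n$ with $y=f(a_1\otimes\cdots\otimes a_{n-1})$. Expand $y=\sum_i y_i\otimes e_i$ in the last of its factors. A direct calculation then gives
\[
\langle y\otimes a_n,\,s(y\otimes a_n)\rangle=\Bigl\|\sum_j \langle a_n,e_j\rangle y_j\Bigr\|^2\ge0 .
\]
This is the one-swap case of the cross-block partial-swap positivity.

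\textbf{Step 3 (central averaging).} The map $F\mapsto\sum_\sigma F(\sigma)\prod_i a_{i,\sigma(i)}$ is invariant under conjugating $F$ by elements of $S_n$: relabeling indices turns the value for $A$ into the value for $PAP^{T}$, which is again PSD and has the same matrix-function values. So I may replace $F$ by its central average. This average equals
\[
\sum_\nu \frac{\operatorname{Tr}_{V_\nu}(F)}{f^\nu}\cdot\frac{f^\nu}{n!}\,\chi^\nu ,
\]
up to a fixed positive normalization, so the resulting functional is a positive multiple of $\sum_\nu \operatorname{Tr}_{V_\nu}(F)\,d_\nu(A)$.

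\textbf{Step 4 (computing the traces).} I use the Young seminormal (Gelfand--Tsetlin) basis of $V_\nu$, indexed by standard tableaux.
\begin{itemize}
\item The operator $f$ projects onto the span of the tableaux whose restrictions to $n-2$ and $n-1$ boxes have shapes $\gamma$ and $\beta$. These tableaux exist only when $\nu\youngcovers\beta$.
\item For such $\nu$ there are exactly $f^\gamma$ of them.
\item In the seminormal form, $s$ has diagonal entry $1/(c_n-c_{n-1})$ on each basis vector, where $c_k$ is the content of box $k$. Here $c_n=c(\nu/\beta)$ and $c_{n-1}=c(\beta/\gamma)$.
\end{itemize}
Hence
\[
\operatorname{Tr}_{V_\nu}(F)=\frac{f^\gamma}{c(\nu/\beta)-c(\beta/\gamma)}
\]
for $\nu\youngcovers\beta$, and the trace is $0$ otherwise. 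Combining this with Step 3 gives $B_{\beta,\gamma}(A)\ge0$ after dividing by the positive constant.

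\textbf{Nonvanishing denominators.} The two boxes $\nu/\beta$ and $\beta/\gamma$ occupy consecutive positions $n-1,n$ in a standard tableau. If they had equal content they would lie on the same diagonal, and the later box would require the box diagonally up-left of it, which would have to lie strictly between them in the tableau; this is impossible.

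The main obstacle I expect is bookkeeping rather than ideas. The conventions ($\sigma$ versus $\sigma^{-1}$, complex conjugation, and the $n!/f^\nu$ normalizations in the central projection) must be fixed so that exactly the unnormalized $d_\nu$, with the stated signs, appears. I would check this on a small example such as $n=3$, $\beta=(2)$, $\gamma=(1)$.
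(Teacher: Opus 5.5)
Your proposal is correct and is essentially the paper's proof. The paper uses the same witness $bsb$ with $b=e_\beta^{(n-1)}e_\gamma^{(n-2)}$, the same squared-norm positivity from resolving the last tensor factor, and the same central averaging.

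The one real difference is in Step 4. You quote the seminormal diagonal entries $1/(c_n-c_{n-1})$. The paper instead derives the trace from the group-algebra identity $sJ_n-J_{n-1}s=1$. Since $J_n$ and $J_{n-1}$ commute with $Q$ and act on its image by $u=c(\nu/\beta)$ and $c=c(\beta/\gamma)$, cyclicity gives $(u-c)\Tr(QsQ)=\Tr Q=f^\gamma$. This avoids invoking the full seminormal form. It also shows $u\ne c$ directly, because the right-hand side is nonzero.

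Two small repairs are needed.

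\textbf{Step 3.} Conjugating $F$ does not leave the value unchanged, and a relabelled matrix does not in general have the same matrix-function values. What conjugation preserves is nonnegativity: the conjugated witness evaluated at $A$ equals the original witness evaluated at $PAP^{T}$, which is again PSD.

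\textbf{Nonvanishing denominators.} Your argument fails when the later box lies immediately diagonally below-right of box $n-1$. In that case the box diagonally up-left of box $n$ is box $n-1$ itself, so nothing lies strictly between them. Use instead the box directly above box $n$. It lies weakly below and strictly right of box $n-1$, so its label would have to lie strictly between $n-1$ and $n$, which is impossible. Alternatively, use the paper's observation that an addable box and a removable box of $\beta$ always have different contents.
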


This refinement is enough to close the rectangle.

\begin{theorem}[The order-$15$ rectangle]\label{thm:rect15}
For every complex Hermitian positive-semidefinite $15\times15$ matrix $A$,
\[
 d_{(3,3,3,3,3)}(A)\le6006\,\per(A),
 \qquad\text{equivalently}\qquad
 \dbar_{(3^5)}(A)\le\per(A).
\]
There are $125$ explicitly defined nonnegative witnesses $R_i$, consisting of $106$ central-projector witnesses and $19$ branching-refined witnesses, and positive integers $z_i,M$ such that
\begin{equation}\label{eq:rectcert}
 \sum_{i=1}^{125}z_iR_i
 =M\bigl(6006\mathbf e_{(15)}-\mathbf e_{(3^5)}\bigr).
\end{equation}
Here the $R_i$ are represented by their raw-immanant coefficient vectors, and $\mathbf e_\nu$ denotes the standard coordinate vector.  The identity is verified exactly in all $p(15)=176$ partition coordinates.
\end{theorem}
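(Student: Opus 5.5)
The proof is a certificate verification: write the target direction $6006\mathbf e_{(15)}-\mathbf e_{(3^5)}$ as a nonnegative rational combination of coefficient vectors of functionals already known to be nonnegative on all Hermitian positive-semidefinite $15\times15$ matrices. Each witness $R_i$ is a real linear form $R_i(A)=\sum_{\nu\vdash15}r_{i,\nu}d_\nu(A)$ with $R_i(A)\ge0$. If \eqref{eq:rectcert} holds coordinatewise, evaluating both sides at $A$ gives $M(6006\per(A)-d_{(3^5)}(A))=\sum z_iR_i(A)\ge0$. Since $f^{(3^5)}=6006$ by the hook length formula, this is the claim. So the argument has three parts: establish nonnegativity of the two witness types, compute their raw-immanant coefficient vectors exactly, and exhibit $z_i,M$.

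For the $106$ central-projector witnesses, I would use Pate's two-block framework as recalled in the introduction. Take a split $n=p+r$ and a swap size $k$. Let $w$ be the sum over products of $k$ disjoint transpositions crossing the two blocks, and form $f^*wf$ with $f$ the central idempotent of an irreducible $S_p\times S_r$ representation, central-averaged. Positivity on decomposable tensors then gives nonnegativity. Its coefficient on $d_\nu$ is computed as follows:
\begin{itemize}[nosep]
\item restrict $\chi^\nu$ to $S_p\times S_r$ via Littlewood--Richardson coefficients;
\item evaluate the class-sum action of the swap element on the relevant isotypic pieces by character values.
\end{itemize}
All of this is exact integer arithmetic over the $176$ partitions.

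For the $19$ branching witnesses, I would invoke Theorem~\ref{thm:branching} with $\beta\vdash14$ and $\gamma\youngcovered\beta$. The coefficient vector is explicit: $1/(c(\nu/\beta)-c(\beta/\gamma))$ on each $\nu\youngcovers\beta$ and zero elsewhere. Clearing denominators makes these integer vectors.

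To find $z_i$, I would solve a feasibility LP. Its variables are the multipliers on the full generator pool: all central witnesses of all block and swap sizes, all $B_{\beta,\gamma}$, and optionally the nonnegativity and known-PDC rays. The constraint is $\sum z_iR_i=t(6006\mathbf e_{(15)}-\mathbf e_{(3^5)})$ with $t>0$, solved first in floating point to locate a support. On that support I would re-solve the $176$ linear equations exactly in rationals, check that every $z_i$ is positive, and scale to integers to obtain $M$.

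The main obstacle is feasibility itself. Theorem~\ref{thm:separator} already shows the central cone alone cannot reach the target, so the branching witnesses must supply exactly the directions the separating functional penalizes. I would start by evaluating that separating functional on all $B_{\beta,\gamma}$ and keeping those with negative value, choosing $\beta$ near $(3^5)$ such as $(3^4,2)$ and $(4,3^3,1)$. I would then add further $\gamma$-choices until the LP becomes feasible, and prune to a minimal support of $106+19$ generators. The final identity is then checked exactly coordinate by coordinate.
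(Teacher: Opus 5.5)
Your proposal is correct and is essentially the paper's proof. The paper likewise reduces the theorem to an exact integer identity among primitive coefficient vectors of $106$ central-projector partial-swap witnesses (computed from the character sum \eqref{eq:centralcoeff}) and $19$ reciprocal-content witnesses $B_{\beta,\gamma}$ with $\beta\vdash14$; linear programming is used only for discovery, and the final identity is checked in all $176$ coordinates. Your variant of summing over all products of $k$ disjoint cross-block transpositions, rather than using the single $\tau_k$, is harmless: $e_\alpha\otimes e_\beta$ commutes with $S_p\times S_q$, which acts transitively on those products, so after central averaging your witness is just a positive multiple of the paper's.
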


Combining Pate's frontier theorem with Theorems~\ref{thm:bridge14}, \ref{thm:m433}, and \ref{thm:rect15} gives the headline conclusion.

\begin{theorem}[Permanental dominance through order fifteen]\label{thm:through15}
For every $n\le15$, every partition $\lambda\vdash n$, and every complex Hermitian positive-semidefinite $n\times n$ matrix $A$,
\[
 \dbar_\lambda(A)\le\per(A).
\]
This is the ordinary irreducible-immanant statement for $S_n$; it is not the full subgroup form of Lieb's conjecture.
\end{theorem}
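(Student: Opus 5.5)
The proof will be a case-by-case reduction that combines Pate's frontier theorem with the new results above. For $n\le13$ I will invoke Pate's 1999 theorem directly. It establishes $\dbar_\lambda(A)\le\per(A)$ for every $\lambda\vdash n$ and every Hermitian positive-semidefinite $A$. For $n=14$ and $n=15$, Pate's theorem covers every partition except $(4,4,3,3)$ at order $14$ and $(5,4,3,3)$, $(3^5)$ at order $15$. So it remains to treat these three shapes.

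\emph{Order $14$, shape $(4,4,3,3)$.} I will use Theorem~\ref{thm:bridge14} in its normalized form \eqref{eq:bridge14norm}. The right-hand side is a convex combination of $\dbar_{653},\dbar_{644},\dbar_{554},\dbar_{5333}$: the four coefficients are positive and sum to one. Each of these four shapes has order $14$ and is different from $(4,4,3,3)$, so each lies in Pate's established classes and satisfies $\dbar_\mu(A)\le\per(A)$. Substituting these four bounds into \eqref{eq:bridge14norm} gives
\[
\dbar_{4433}(A)\le\Bigl(\tfrac{20175+20175+79518+118180}{238048}\Bigr)\per(A)=\per(A).
\]
One point needs care here. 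The convexity step uses only the upper bounds on the $\dbar_\mu$, and it works because the coefficients are nonnegative. I will also check that Pate's certification of those four shapes does not itself depend on $(4,4,3,3)$, so that the argument is not circular.

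\emph{Order $15$, shape $(5,4,3,3)$.} This is the case $m=5$ of Theorem~\ref{thm:m433}, applied to matrices of order $m+10=15$.

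\emph{Order $15$, shape $(3^5)$.} Theorem~\ref{thm:rect15} gives $d_{(3^5)}(A)\le6006\,\per(A)$. Since $f^{(3^5)}=6006$ (by the hook length formula), this is exactly $\dbar_{(3^5)}(A)\le\per(A)$.

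The main potential obstacle is bookkeeping rather than mathematics. I must confirm that Pate's list of exceptions at orders $14$ and $15$ is exactly these three shapes, using the frontier theorem as stated, and that the transposes and conjugates are already covered by his classes, since PDC is not automatically transpose-invariant. Once that is checked, the three cases together with Pate's result for all other shapes give the theorem for every $n\le15$.
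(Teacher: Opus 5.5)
Your proposal is correct and follows essentially the same route as the paper. Both invoke Pate's frontier theorem for every shape except $(4,4,3,3)$, $(5,4,3,3)$ and $(3^5)$. Both then close these three with the convex order-$14$ bridge, the $m=5$ case of Theorem~\ref{thm:m433} (which is exactly the seven-witness identity \eqref{eq:5433bridge} the paper cites), and the rectangle certificate of Theorem~\ref{thm:rect15} together with $f^{(3^5)}=6006$.
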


The final part of the paper studies the architecture rather than a fixed order.  At swap size $k$, arbitrary local filters can be compressed to the last $k$ Young-branching steps, with multiplicity spaces of dimension at most $k!$.  At $k=1$ and block sizes $(n-1,1)$ this compression is exact enough to show saturation: arbitrary local filters generate precisely the cone of Theorem~\ref{thm:branching}.  Thus retaining longer Young paths cannot strengthen that architecture.  We then determine the exact node-moving closure of the explicit theorem system and prove that its covered partitions have asymptotic density zero.  This density statement concerns only the specified family-and-node-moving program and is not a no-go theorem for unrestricted $W$-functions or higher-swap coherent filters.

\section{Background: immanants, Pate's order, and tensor witnesses}\label{sec:background}
\subsection{Gram tensors and central projectors}
Let $A\succeq0$ be complex Hermitian.  Choose Gram vectors $x_1,\dots,x_n$ in a finite-dimensional complex Hilbert space $H$ so that $a_{ij}=\langle x_i,x_j\rangle$, and put $x=x_1\otimes\cdots\otimes x_n$.  Let $U_\sigma$ permute tensor positions with the convention
\[
 \langle x,U_\sigma x\rangle=\prod_i a_{i,\sigma(i)}.
\]
For $\lambda\vdash n$, the central orthogonal idempotent
\[
 e_\lambda=\frac{f^\lambda}{n!}\sum_{\sigma\in S_n}\chi^\lambda(\sigma^{-1})\sigma
\]
satisfies
\begin{equation}\label{eq:projpairing}
 \langle x,U(e_\lambda)x\rangle=\frac{f^\lambda}{n!}d_\lambda(A).
\end{equation}
This realizes ordinary immanants as isotypic tensor norms and immediately covers singular PSD matrices: no invertibility hypothesis is present in the Gram construction.

\subsection{Pate's positivity architecture and historical frontier}
Pate's 1997 ``machine'' is built from two tensor blocks, local group-algebra filters, and an involution that is a product of disjoint transpositions crossing the blocks \cite{Pate1997}.  In present notation, its core pattern is
\[
 W=(a\otimes b)^*\tau(a\otimes b),
\]
with $\tau$ a cross-block involution.  The associated generalized matrix function is nonnegative on positive-semidefinite Hermitian matrices, and averaging conjugates produces a class function and hence an immanant inequality.  The present paper uses this architecture rather than claiming it anew.

Pate's 1999 paper gives a particularly convenient statement of the known ordinary-immanant frontier \cite{Pate1999}.  It proves PDC for partitions of the form $(p,q^u,r^v,2^s,1^t)$ with $0\le u\le2$ and $0\le v\le1$, and deduces PDC for all ordinary immanants of order at most $13$.  It then states explicitly that $(4,4,3,3)$ is the sole remaining order-$14$ case and that at order $15$ only $(5,4,3,3)$ and $(3^5)$ remain.  We use that statement as our historical baseline.

The node-moving comparison used later is also not new.  Pate's 1997 paper restates the earlier node-diagram theorem: moving an outer node from a row of length greater than one to a new singleton row weakly decreases the normalized immanant \cite{Pate1997,Pate1992}.  We rely on the inspected 1997 restatement; the 1992 bibliographic source is listed for provenance.

\section{The order-fourteen bridge}\label{sec:order14}
The order-$14$ frontier can be closed by a positive linear certificate.  We summarize the conceptual construction and leave the twenty rows of exact coefficient data in the ancillary archive.

Split the tensor factors so that $k\in\{1,2\}$ positions in one local block are paired with $k$ positions in the other.  Let $P,Q$ be suitable Young projectors on nested initial segments and, for $k=2$, let $B$ be the symmetric or alternating projector on the final pair.  If $\tau_k$ is the product of the paired transpositions, the filtered operator has the form
\[
 W=QBP\,U_{\tau_k}BQ.
\]
The full construction in \texttt{anc/order14/PROOF.md}, Sections 2--4, obtains this operator via partial transpose from a positive operator, showing that $\langle x,Wx\rangle\ge0$ on every decomposable Gram tensor.  Central averaging therefore yields a linear immanant inequality.  Twenty explicitly selected witnesses are combined with strictly positive rational weights.  Exact cancellation on all $p(14)=135$ partitions leaves precisely the five coordinates in \eqref{eq:bridge14}.

\begin{proposition}[Exact certificate identity]\label{prop:cert14}
There are twenty explicitly specified universally nonnegative witness forms $W_j$ and positive rational weights $q_j$ such that their central immanant coefficient vectors satisfy
\[
 \sum_{j=1}^{20}q_jW_j
 =4035d_{653}+6725d_{644}+39759d_{554}+23636d_{5333}-59512d_{4433}.
\]
The identity is an equality over $\mathbb Q$ in all $135$ irreducible coordinates.
\end{proposition}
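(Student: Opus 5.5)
The plan has three stages: describe the twenty witnesses explicitly, compute their central immanant coefficient vectors exactly, and solve for positive rational weights $q_j$ whose combination cancels every coordinate except the five in \eqref{eq:bridge14}.

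\textbf{The witnesses.} Each witness is a filtered operator $W_j=QBP\,U_{\tau_k}BQ$ as described above. Its data are a block split $14=n_1+n_2$, a swap size $k\in\{1,2\}$, Young projectors $P,Q$ on nested initial segments in each block, and, for $k=2$, a choice of $B$ as the symmetric or alternating projector on the final pair. Nonnegativity $\langle x,W_jx\rangle\ge0$ on decomposable Gram tensors is taken from the partial-transpose argument cited above. For instance, when $k=1$ the swap $U_{\tau_1}$ is the partial transpose of the rank-one operator $|\Omega\rangle\langle\Omega|$ on the swapped pair. The filters are then local to the two blocks, so the form factors as a positive pairing. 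Central averaging over $S_{14}$ converts each witness into a class function, and hence into a coefficient vector in the $135$ irreducible coordinates.

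\textbf{Computing the coefficient vectors.} The coefficient on $d_\nu$ equals, up to the normalization in \eqref{eq:projpairing}, the trace $\Tr(e_\nu W_j)$ divided by $f^\nu$. I will compute this trace representation-theoretically rather than by summing over $S_{14}$. Induce the local Young projectors to $S_{14}$ using Littlewood--Richardson multiplicities. The swap contribution is then evaluated in Young's seminormal basis, where the transposition $(s,s+1)$ at a branching step acts by the content formula with diagonal entry $1/(c_{s+1}-c_s)$. Conjugating the crossing transposition to an adjacent one reduces everything to rational content expressions and branching data. Each coefficient vector is thus an exact rational vector, computable by machine in integer arithmetic.

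\textbf{Solving for the weights.} With the twenty vectors in hand, I solve the linear system $\sum_j q_jW_j=\text{RHS}$ over $\mathbb Q$, where $\text{RHS}$ is the right-hand side of the proposition. I then verify equality coordinatewise in all $135$ partitions and check $q_j>0$. To choose the twenty witnesses, I would first run a linear program over the large pool of admissible $(n_1,n_2,k,P,Q,B)$. The target is the direction $-e_{4433}$ plus a nonnegative combination of the four Pate-covered coordinates. I would take a basic feasible solution and rationalize it.

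\textbf{Main obstacle.} The main difficulty is finding a witness set whose exact solution is strictly positive and cancels every other coordinate exactly. Floating-point LP output must be converted to an exact vertex, and degeneracy may produce zero or negative weights. My first remedy is to select the support from the LP basis and re-solve exactly. If that fails, I would perturb the objective toward the stated coefficients $4035,6725,39759,23636$ until the support becomes nondegenerate.
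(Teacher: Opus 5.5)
Your proposal follows the paper's route: the same filtered partial-swap witnesses $QBP\,U_{\tau_k}BQ$ with $k\in\{1,2\}$, nonnegativity on decomposable Gram tensors via the partial-transpose and local-filter argument, central averaging to obtain coefficient vectors in the $135$ irreducible coordinates, and an exactly verified positive rational combination that leaves only the five stated coordinates. The paper likewise uses numerical search only for discovery and relies on the explicit twenty witnesses and weights, supplied and checked in exact arithmetic in the ancillary archive; your argument becomes a proof once that certificate is actually exhibited, with the $135$ coordinate identities and the positivity of every $q_j$ verified.
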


The hook-length degrees stated in Theorem~\ref{thm:bridge14} give the normalized weights.  A useful consistency check is
\[
 59512\cdot12012=4035\cdot15015+6725\cdot9009+39759\cdot6006+23636\cdot15015=714858144.
\]
Thus \eqref{eq:bridge14norm} is exactly convex rather than approximately so.

The final Lean project included with the ancillary material formalizes the new bridge over complex Hermitian PSD matrices, including the tensor/Gram and representation-theoretic links, and formalizes the normalized transfer to $(4,4,3,3)$ conditional on four theorem parameters representing the historical Pate cases.  A separate final validation rebuilt the pinned Lean~4.19.0 project and the full \texttt{Bridge.Young} proof chain successfully.  The project contains no \texttt{sorry} or \texttt{admit}; a theorem-dependency audit found no \texttt{sorryAx} and no additional mathematical axioms beyond \texttt{propext}, \texttt{Classical.choice}, and \texttt{Quot.sound}.  The project does not formalize the historical Pate proofs or the order-$15$ results.

\section{Explicit partial-swap and Pieri--content witnesses}\label{sec:pieri}
\subsection{General local filters}
Let $n=p+q$, let $a\in\C[S_p]$ and $b\in\C[S_q]$ be arbitrary, and for $1\le k\le\min(p,q)$ put
\[
 \tau_k=(1,p+1)\cdots(k,p+k),\qquad
 W=(a\otimes b)^*\tau_k(a\otimes b).
\]
For $\nu\vdash n$ set
\[
 t_\nu=\Tr_{S^\nu}\rho_\nu(W).
\]

\begin{proposition}[Pate-type partial-swap positivity]\label{prop:partialswap}
For every complex Hermitian PSD matrix $A$,
\begin{equation}\label{eq:partialswap}
 \sum_{\nu\vdash n}t_\nu d_\nu(A)\ge0.
\end{equation}
\end{proposition}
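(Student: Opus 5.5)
The plan is to prove the inequality in two stages. First, show that $\langle x, U(W)\,U(\pi)x\rangle$ is nonnegative for every decomposable Gram tensor $x$ and every permutation $\pi$ of tensor positions; this is the positivity of a single generalized matrix function. Second, average over $\pi\in S_n$ to pass to central elements, and identify the resulting class function with $\sum_\nu t_\nu d_\nu(A)$.

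\emph{Positivity step.} I would relabel so that $\pi$ is absorbed into the Gram vectors. Replacing $A$ by $P_\pi^*AP_\pi$ keeps it Hermitian PSD, so it suffices to show $\langle x,U(W)x\rangle\ge0$ for all $x=x_1\otimes\cdots\otimes x_n$. Write $H^{\otimes n}=H^{\otimes p}\otimes H^{\otimes q}$ and put $y=U(a)(x_1\otimes\cdots\otimes x_p)$ and $z=U(b)(x_{p+1}\otimes\cdots\otimes x_n)$. Then
\[
\langle x,U(W)x\rangle=\langle y\otimes z,\,U(\tau_k)(y\otimes z)\rangle .
\]
The vectors $y,z$ are no longer decomposable, so I would use the structure of $\tau_k$. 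It acts as the swap operator between the subsystem $H^{\otimes k}$ formed by the first $k$ factors of block one and the subsystem formed by the first $k$ factors of block two; on all other factors it is the identity. Let $\rho_1=\Tr_{\text{rest}}|y\rangle\langle y|$ be the reduced density operator of $y$ on its first $k$ factors, and let $\rho_2$ be the analogous operator for $z$; both are PSD on $H^{\otimes k}$. The swap trick gives $\langle y\otimes z,\mathrm{SWAP}(y\otimes z)\rangle=\Tr(\rho_1\rho_2)$, and this is $\ge0$ because it is the trace of a product of two PSD operators. This is the step I regard as the crux. It needs care in matching the convention $\langle x,U_\sigma x\rangle=\prod a_{i,\sigma(i)}$ with the swap acting on the correct factors, and a check that the partial-trace identity holds with this convention. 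I would verify it for $p=q=k=1$, where it reduces to $|\langle x_1,x_2\rangle|^2\ge0$.

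\emph{Averaging step.} Summing over $\pi$ gives
\[
0\le\sum_{\pi}\langle x,U(\pi^{-1}W\pi)x\rangle=\langle x,U(\overline W)x\rangle,\qquad \overline W=\sum_\pi\pi^{-1}W\pi .
\]
The element $\overline W$ is central, so $\overline W=\sum_\nu c_\nu e_\nu$. Applying the irreducible representation $\rho_\nu$ and taking traces,
\[
n!\,t_\nu=\Tr\rho_\nu(\overline W)=c_\nu f^\nu ,
\]
so $c_\nu=n!\,t_\nu/f^\nu$. By \eqref{eq:projpairing}, $\langle x,U(\overline W)x\rangle=\sum_\nu c_\nu\frac{f^\nu}{n!}d_\nu(A)=\sum_\nu t_\nu d_\nu(A)$, which is \eqref{eq:partialswap}.

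A final check is that the coefficients are consistent under the convention $U_\sigma U_\tau=U_{\sigma\tau}$ versus the anti-homomorphism; if needed, $W$ is replaced by its image under $\sigma\mapsto\sigma^{-1}$. This substitution does not change $t_\nu$, because characters are invariant under inversion and $W^*=W$ (since $\tau_k$ is an involution).
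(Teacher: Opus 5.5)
Your proposal is correct and follows essentially the same route as the paper: positivity of $\langle u\otimes v,U(\tau_k)(u\otimes v)\rangle$ on the filtered decomposable tensor via the swap trick (your $\Tr(\rho_1\rho_2)$ is exactly the paper's $\sum_{R,S}\bigl|\sum_I u_{I,R}\overline{v_{I,S}}\bigr|^2$), invariance under conjugation by permutations, and central averaging with Schur's lemma and \eqref{eq:projpairing}, with the same normalization $c_\nu=n!\,t_\nu/f^\nu$. One small slip: in your opening sentence the quantity to control is $\langle x,U(\pi^{-1}W\pi)x\rangle$, i.e.\ $U(W)$ evaluated at the permuted decomposable tensor, not $\langle x,U(W)U(\pi)x\rangle$, which is not nonnegative in general; your positivity and averaging steps do in fact use the correct form.
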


\begin{proof}
After applying the local filters, the decomposable tensor separates as $u\otimes v$ across the two blocks.  Resolve the $k$ swapped tensor positions and the unswapped remainders, say $u_{I,R}$ and $v_{I,S}$.  Direct expansion yields
\[
 \langle u\otimes v,U(\tau_k)(u\otimes v)\rangle
 =\sum_{R,S}\left|\sum_Iu_{I,R}\overline{v_{I,S}}\right|^2\ge0.
\]
Equivalently this is the trace of a product of two reduced PSD operators.  Conjugating $W$ by a permutation preserves nonnegativity on decomposable tensors.  Central averaging and Schur's lemma give
\[
 Z(W)=\frac1{n!}\sum_{g\in S_n}gWg^{-1}
 =\sum_\nu\frac{t_\nu}{f^\nu}e_\nu.
\]
Pairing with a Gram tensor and using \eqref{eq:projpairing} gives \eqref{eq:partialswap}.  This proof neither assumes nor concludes that $W$ is PSD on the full tensor space.
\end{proof}

For central projectors $a=e_\alpha$, $b=e_\beta$, cyclicity gives the exact character sum
\begin{equation}\label{eq:centralcoeff}
 t_\nu=\frac{f^\alpha f^\beta}{p!q!}
 \sum_{\sigma\in S_p,\pi\in S_q}
 \chi^\alpha(\sigma)\chi^\beta(\pi)
 \chi^\nu((\sigma\oplus\pi)\tau_k).
\end{equation}
For $k\ge1$ the witness is balanced:
\begin{equation}\label{eq:balance}
 \sum_\nu f^\nu t_\nu=0.
\end{equation}
The balance is the regular-representation trace: the cross-block permutation has zero identity coefficient in the block subgroup.

\subsection{The symmetric-block content formula}
Suppose the first block carries the symmetric representation $(p)$ and the second block carries $\beta\vdash q$.  For a horizontal $p$-strip $\nu/\beta$ define
\[
 P_{\nu/\beta}(z)=\prod_{b\in\nu/\beta}(z+c(b)),
\]
and set
\begin{equation}\label{eq:Ndef}
 N^\nu_{p,\beta,k}=\frac1{(p-k)!}
 \sum_{j=0}^{p-k}(-1)^{p-k-j}\binom{p-k}{j}
 P_{\nu/\beta}(j-p+1),
\end{equation}
with $N=0$ when $\nu/\beta$ is not a horizontal $p$-strip.

\begin{theorem}[Pieri--content finite differences]\label{thm:pieri}
For every $\beta\vdash q$ and $1\le k\le\min(p,q)$,
\begin{equation}\label{eq:phi}
 \Phi_{p,\beta,k}(A)=\sum_{\nu\vdash p+q}N^\nu_{p,\beta,k}d_\nu(A)\ge0,
\end{equation}
and
\[
 \sum_\nu f^\nu N^\nu_{p,\beta,k}=0.
\]
\end{theorem}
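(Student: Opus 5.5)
The plan is to obtain \eqref{eq:phi} from Proposition~\ref{prop:partialswap} applied with $a=e_{(p)}$ and $b=e_\beta$, and then to evaluate the central coefficients $t_\nu$ of \eqref{eq:centralcoeff} in closed form. If we show that $t_\nu=c\,N^\nu_{p,\beta,k}$ for a single constant $c>0$ independent of $\nu$, then nonnegativity follows immediately from \eqref{eq:partialswap}. The balance identity $\sum_\nu f^\nu N^\nu_{p,\beta,k}=0$ then follows from \eqref{eq:balance}, because $k\ge1$.

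For the reduction to branching data, reorder the tensor positions so that the $\beta$-block comes first and the symmetric block occupies positions $q+1,\dots,q+p$. This conjugation does not change the central coefficients. In $S^\nu$, the operator $e_\beta\otimes e_{(p)}$ projects onto the $(\beta\otimes\mathrm{triv})$-isotypic part of the restriction to $S_q\times S_p$. By Pieri's rule this part is nonzero exactly when $\nu/\beta$ is a horizontal $p$-strip, and then it has multiplicity one. This already gives $N=0$ in the remaining cases. Hence
\[
t_\nu=\Tr\bigl(\rho_\nu(e_\beta\otimes e_{(p)})\,\rho_\nu(\tau_k)\bigr).
\]
Since $e_{(p)}$ absorbs permutations of the second block, we may replace $\tau_k$ by its average over the two blocks' relabellings. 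Concretely, $e_{(p)}\tau_k e_{(p)}$ equals $e_{(p)}$ times $\frac{(q-k)!}{q!}$ times the sum of all products of $k$ disjoint transpositions $(i_1,q+1)\cdots(i_k,q+k)$ with distinct $i_r\le q$. These products should be rewritten through the Jucys--Murphy elements $J_{q+1},\dots,J_{q+p}$, whose eigenvalues on the seminormal vectors of the strip are the contents $c(b)$, $b\in\nu/\beta$.

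The key identity I would aim for is the following. On the image of $e_\beta\otimes e_{(p)}$ in $S^\nu$, the symmetrized operator $\prod_{r=1}^{p}(z+J_{q+r})$ acts as the scalar $P_{\nu/\beta}(z)$. This is plausible because the product is symmetric in the $J$'s and the trivial component is a one-dimensional multiplicity space. Each $J_{q+r}$ splits into transpositions reaching into the $\beta$-block and transpositions internal to the symmetric block, and the internal ones act on $e_{(p)}$ as scalars. Expanding the product on $e_{(p)}$ should therefore express $P_{\nu/\beta}(z)$ as a polynomial in $z$. Its coefficients would be the symmetrized $m$-fold cross-swap operators for $m=0,\dots,p$, weighted by falling-factorial-type polynomials coming from the internal transpositions; I expect these polynomials to be binomial coefficients in $z+p-1$. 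Inverting this triangular relation by finite differences then isolates the $k$-fold cross-swap trace. I expect this inversion to produce exactly $\frac{1}{(p-k)!}\Delta^{p-k}$ applied to $P_{\nu/\beta}$ at the shifted point $z=j-p+1$, as in \eqref{eq:Ndef}, with the constant $c$ equal to $f^\beta$ times the combinatorial normalization.

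The main obstacle is this middle step: pinning down precisely how $\prod_r(z+J_{q+r})$ decomposes on the trivial $S_p$-component into cross-swap operators, with the correct shift. I would first verify the case $p=1$, where $J_{q+1}$ acts by $c(\nu/\beta)$ and $t_\nu$ is proportional to $c(\nu/\beta)$. Next I would check $p=2$, $k=1,2$ against \eqref{eq:centralcoeff} by hand. Then I would prove the general decomposition by induction on $p$, using $e_{(p)}=e_{(p)}e_{(p-1)}$ and the branching $\beta\to\beta^+\to\nu$.
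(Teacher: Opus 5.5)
Your plan is the paper's proof. It applies Proposition~\ref{prop:partialswap} with $e_{(p)}\otimes e_\beta$, uses Pieri multiplicity one so that $\prod_{r=1}^p(z+J_{q+r})$ acts by $P_{\nu/\beta}(z)$, expands that product according to the number $k$ of cross-block swaps, inverts the resulting Newton (finite-difference) expansion, and gets balance from \eqref{eq:balance}.

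The step you leave open closes as you predict. Commuting the internal part of each $J_{q+r}$ rightward past the later sums $X_{r'}=\sum_{i\le q}(i,q+r')$ gives $\prod_{r=1}^p(z+r-1+X_r)\,e_{(p)}$. Counting then shows that $\Tr_{S^\nu}(\tau_k(e_\beta\otimes e_{(p)}))$ has weight $\binom qk(p)_k(z+k)^{\overline{p-k}}$, which inverts to the paper's constant $f^\beta/(\binom pk(q)_k)$.

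One small slip: $e_{(p)}\tau_ke_{(p)}$ averages over the symmetric-block endpoints, not over old labels. Your average over old labels $i_r\le q$ is valid only inside the trace, by centrality of $e_\beta$ in $\C[S_q]$.
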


\begin{proof}
Pieri's rule makes the $(p)\otimes\beta$ subgroup type multiplicity one precisely for horizontal $p$-strips.  Order the old labels before the new labels and let $J_i=\sum_{j<i}(j,i)$ be the Jucys--Murphy elements.  On the selected subgroup type,
\[
 \prod_{r=1}^p(z+J_{q+r})
\]
acts by the scalar $P_{\nu/\beta}(z)$.  Expanding this product by inserting the new labels into permutation cycles groups the terms according to the number $k$ of old labels lying in nontrivial mixed cycles.  Modulo permutations of the symmetric new block, a term with $k$ such old labels reduces to $k$ disjoint cross-block transpositions.  Counting the insertion patterns yields the Newton expansion
\[
 P_{\nu/\beta}(z)
 =\sum_{k=0}^pN^\nu_{p,\beta,k}(z+k)^{\overline{p-k}},
\]
where $(z)^{\overline r}=\prod_{j=0}^{r-1}(z+j)$ is a rising factorial.  After tracing, one obtains
\[
 \Tr_{S^\nu}\bigl(\tau_k(e_{(p)}\otimes e_\beta)\bigr)
 =\frac{f^\beta}{\binom pk(q)_k}N^\nu_{p,\beta,k}.
\]
Here $(q)_k=\prod_{j=0}^{k-1}(q-j)$ is a falling factorial.  The prefactor is positive and independent of $\nu$, so Proposition~\ref{prop:partialswap} proves \eqref{eq:phi}; \eqref{eq:balance} proves the balance relation.
\end{proof}

On the horizontal-strip support, the $k=1$ formula simplifies to
\begin{equation}\label{eq:k1content}
 N^\nu_{p,\beta,1}=\kappa_\nu-\kappa_\beta-\binom p2,
\qquad \kappa_\lambda=\sum_{b\in\lambda}c(b).
\end{equation}
For $k=2$ one obtains a quadratic expression in total and squared contents, while for $p=4,k=3$,
\[
 N^\nu_{4,\beta,3}=P_{\nu/\beta}(-2)-P_{\nu/\beta}(-3).
\]
The zeros of the content polynomial create strong sparsity.  This is the computational feature that makes exact symbolic search practical.

\section{Uniform dominance families}\label{sec:families}
\subsection{\texorpdfstring{The family $(m,4,3,3)$}{The family (m,4,3,3)}}
For $m\ge6$ define
\[
 D_m=\Phi_{4,(m,3,3),3},\qquad
 E_m=\Phi_{m,(4,3,3),1},\qquad
 F_m=\Phi_{m+1,(3,3,3),2}.
\]
The combination
\begin{equation}\label{eq:threewitness}
 9D_m+90(m-3)E_m+5(m-3)F_m
\end{equation}
is a positive combination of nonnegative witnesses.  Its coefficient at $(m,4,3,3)$ is $-180(5m-9)$; the remaining seven coefficients are
\begin{center}\small
\begin{tabular}{ll}
\toprule
partition $\mu_i$ & coefficient $r_i(m)$\\
\midrule
$(m+4,3,3)$ & $3m(17m^2+74m-351)$\\
$(m+3,4,3)$ & $9(m-3)(m^2+27m-8)$\\
$(m+3,3,3,1)$ & $14m^3+21m^2-711m+1296$\\
$(m+2,4,3,1)$ & $18(m-3)(7m-24)$\\
$(m+2,3,3,2)$ & $10(5m^2-41m+114)$\\
$(m+1,4,3,2)$ & $90(m-3)(m-6)$\\
$(m+1,3,3,3)$ & $90(m-1)(m-6)$\\
\bottomrule
\end{tabular}
\end{center}
Writing $m=t+6$ makes each polynomial manifestly nonnegative.  Balance gives
\[
 \sum_i r_i(m)f^{\mu_i}=180(5m-9)f^{(m,4,3,3)},
\]
so the resulting normalized inequality is convex.  The first six right-hand partitions have at most three parts exceeding $2$, and the last is covered by Pate's repeated-row class.  This proves Theorem~\ref{thm:m433} for $m\ge6$.

The remaining endpoint $m=5$ is the exact identity
\begin{align}\label{eq:5433bridge}
\dbar_{(5,4,3,3)}\le{}&\frac{14}{33}\dbar_{(9,3,3)}+
\frac{791}{1485}\dbar_{(8,4,3)}\\
&+\frac{2}{55}\dbar_{(7,5,3)}+
\frac{2}{297}\dbar_{(7,4,4)}.\notag
\end{align}
It is obtained from seven explicitly specified $\Phi_{p,\beta,k}$ witnesses and checked by exact integer arithmetic.  All four right-hand shapes have three rows.  The $m=4$ endpoint is the order-$14$ bridge.

For completeness, Pate's node-moving comparison applied to the first row of $(5,4,3,3)$ also gives
\begin{equation}\label{eq:44331}
 \dbar_{(4,4,3,3,1)}\le\dbar_{(5,4,3,3)}\le\per.
\end{equation}
This explicit treatment is useful when checking closure by node movement, although Pate's 1999 frontier statement already isolates the two order-$15$ shapes $(5,4,3,3)$ and $(3^5)$.

\subsection{Proof of the arbitrary-tail recurrence}
We prove Theorem~\ref{thm:longrow}.  Apply the $k=1$ witness \eqref{eq:k1content} with first block length $m$ and tail $\rho$.  By Pieri interlacing, every horizontal $m$-strip child can be written uniquely as
\[
 \nu=(m+j,\tau),
\]
where $\rho/\tau$ is a horizontal $j$-strip.  The $j=0$ child is the target $(m,\rho)$.  Shifting every row of $\tau$ down by one changes total content by $-|\tau|$, and one obtains
\[
 \kappa_{(m+j,\tau)}=\binom{m+j}{2}+\kappa_\tau-|\tau|.
\]
Substitution into \eqref{eq:k1content} gives the target coefficient $-q$ and the off-target coefficient $R_{m,\rho}(\tau)$ in Theorem~\ref{thm:longrow}.

If $j$ boxes are removed from $\rho$ in a horizontal strip, each has content at most $C(\rho)$, hence
\[
 R_{m,\rho}(\tau)\ge j(q-1)+\binom j2-q+j=(j-1)q+\binom j2\ge0.
\]
The witness inequality therefore gives
\[
 qd_{(m,\rho)}\le\sum_{\tau\ne\rho}R_{m,\rho}(\tau)d_{(m+j,\tau)}.
\]
After dividing by $qf^{(m,\rho)}$, balance gives the exact coefficient sum one in \eqref{eq:longrowrec}.

It remains to close the induction.  Removing one outer corner can create a new maximal removable content larger by at most one, so after removing $j$ boxes
\[
 C(\tau)\le C(\rho)+j.
\]
Therefore
\[
 |\tau|+C(\tau)-1\le q+C(\rho)-1\le m<m+j,
\]
and $m+j\ge\tau_1$.  Every nonempty right-hand tail again satisfies the theorem's criterion and is strictly smaller.  Strong induction on $q$ terminates at the permanent.

The threshold is exact for this single-witness argument: removing a box of content $C(\rho)$ gives a $j=1$ child with coefficient $m-q-C(\rho)+1$, which is negative below the stated threshold.  This is not a no-go theorem for positive combinations of several witnesses.

\subsection{\texorpdfstring{The region $(a,b,3,3)$}{The region (a,b,3,3)}}
When $a\ge2b+4$, Theorem~\ref{thm:longrow} already applies.  In the remaining interval define
\[
 D=\Phi_{b,(a,3,3),3},\qquad
 E=\Phi_{a,(b,3,3),1},
\]
and
\begin{equation}\label{eq:abcombination}
 W=(2b+4-a)D+b(b+1)(3a-4b+4)E.
\end{equation}
The possible support can be indexed by
\[
 \nu_{g,c}=(a+b-g-c,g+3,3,c),
 \qquad0\le g\le b-3,\quad0\le c\le3,
\]
omitting a final zero.  Content-polynomial zeros force all larger $g$ coordinates to vanish.  Let $H_{g,c}(a,b)$ be the coefficient of $\nu_{g,c}$ in \eqref{eq:abcombination}.  The target and its nearest competitor satisfy the exact identities
\[
 H_{b-3,3}=-2b(b+1)(b+8)(a-b+2)<0,
 \qquad H_{b-4,3}=0.
\]
Every other coefficient is nonnegative under $a\ge b\ge4$ and $5a\ge8b$.

The infinite sign assertion is not inferred from finite testing.  For $c=0,1,2$ write $b=g+u+3$, and for $c=3$ away from the target and cancelled row write $b=g+u+5$.  Parameterize
\[
 a=\frac{8b}{5}+\left(\frac{2b}{5}+4\right)x,
 \qquad0\le x\le1.
\]
Each coefficient has degree at most four in $x$ and is expanded in the Bernstein basis
\[
 H=\sum_{k=0}^4\binom4k x^k(1-x)^{4-k}Q_{c,k}(g,u).
\]
The ancillary verifier regenerates all twenty rational polynomials $Q_{c,k}$ from the content formulas and checks a finite exact positivity cover of the nonnegative integer quadrant: $420$ translated polynomials have nonnegative monomial coefficients and $1895$ remaining rational grid values are nonnegative.  The exceptional slice $b=7$ is handled by a separate Bernstein expansion on the exact admissible interval.  Hence all off-target coefficients are nonnegative.

Balance now turns $W\ge0$ into a convex recurrence.  Every right-hand term with fourth part at most $2$ is in Pate's class.  Every surviving four-large-part term is $(a+t,b-t,3,3)$ with $t\ge2$ and still satisfies $5(a+t)\ge8(b-t)$.  Induction on $b$ proves Theorem~\ref{thm:ab33}.

\section{The central witness cone and an exact obstruction}\label{sec:cone}
The symmetric-block formulas naturally suggest enumerating the full cone generated by all central-projector two-block partial-swap witnesses.  At orders fourteen and fifteen the exact data are:
\begin{center}
\begin{tabular}{ccccc}
\toprule
$n$ & $p(n)$ & parameter labels & distinct nonzero primitive rays & exact span rank\\
\midrule
14 & 135 & 5095 & 3366 & 134\\
15 & 176 & 8112 & 5398 & 175\\
\bottomrule
\end{tabular}
\end{center}
The ranks are exact and agree with the codimension-one balance relation.  As in Theorem~\ref{thm:separator}, cone membership is expressed in normalized-immanant coordinates: a raw coefficient row $(r_\nu)$ becomes $(f^\nu r_\nu)$.

For the rectangle, the ancillary file \texttt{dual\_15\_33333.json} gives an integer functional $y_\nu\in[0,10000]$.  Every stored primitive central generator $r$ satisfies the exact dimension-weighted inequality
\[
 \sum_\nu r_\nu f^\nu y_\nu\ge0,
\]
with minimum $98$.  Yet
\[
 y_{(15)}=9700,\qquad y_{(3^5)}=10000.
\]
Thus the target $\mathbf e_{(15)}-\mathbf e_{(3^5)}$ has pairing $-300$ before the common scaling.  The functional is also nonnegative on individual immanant-nonnegativity rays and Schur lower-bound rays, and satisfies $y_\nu\le9700$ for every $\nu\ne(3^5)$.  Consequently even adjoining every other order-$15$ PDC ray cannot place the rectangle target inside this central cone.  This proves Theorem~\ref{thm:separator} by Farkas separation.

The separator values are not immanant values of a PSD matrix and do not disprove PDC.  They prove only that this specified central architecture is insufficient.

\section{Branching-refined one-swap witnesses}\label{sec:branching}
We now retain the final predecessor in the Young branching graph.  Let $\beta\vdash n-1$ and $\gamma\youngcovered\beta$, and set
\[
 b=e_\beta^{(n-1)}e_\gamma^{(n-2)},\qquad s=(n-1,n),\qquad W=bsb.
\]
The two idempotents commute because $e_\beta^{(n-1)}$ is central in the larger local algebra, so $b$ is an orthogonal projection.

\subsection{Squared-norm positivity}
For a Gram tensor $x=x_1\otimes\cdots\otimes x_n$, the filter $b$ acts only on the first $n-1$ positions; write
\[
 bx=u\otimes v,
\qquad v=x_n.
\]
Resolving the last factor of $u$ gives the exact complex identity
\begin{equation}\label{eq:branchsq}
 \langle u\otimes v,s(u\otimes v)\rangle
 =\bigl\|(I\otimes\langle v,\cdot\rangle)u\bigr\|^2\ge0.
\end{equation}
Thus $W$ is nonnegative on decomposable Gram tensors.  As before, central averaging preserves this property.  No reality assumption and no positive-definiteness assumption are used.

\subsection{The reciprocal-content trace}
Let
\[
 J_m=\sum_{i<m}(i,m)
\]
be the Jucys--Murphy element and again $s=(n-1,n)$.  The group-algebra identity
\begin{equation}\label{eq:JMidentity}
 sJ_n-J_{n-1}s=1
\end{equation}
is immediate from $J_n=sJ_{n-1}s+s$.

Fix an irreducible $S^\nu$.  If $\nu$ does not cover $\beta$, $b$ acts as zero.  Otherwise let $Q$ be the action of $b$.  Multiplicity-free branching gives
\[
 \Tr Q=f^\gamma.
\]
On the selected path, $J_n$ and $J_{n-1}$ act by the contents
\[
 u=c(\nu/\beta),\qquad c=c(\beta/\gamma).
\]
Multiply \eqref{eq:JMidentity} by $Q$ and take traces.  Using cyclicity and $Q^2=Q$ gives
\[
 (u-c)\Tr(Qs)=\Tr Q=f^\gamma,
\]
and therefore
\begin{equation}\label{eq:branchtrace}
 \Tr_{S^\nu}(bsb)=\frac{f^\gamma}{c(\nu/\beta)-c(\beta/\gamma)}.
\end{equation}
An addable box of $\beta$ and a removable box of $\beta$ cannot have equal content: comparing their row indices makes the difference strictly positive or strictly negative.  Hence the denominator never vanishes.  Substituting \eqref{eq:branchtrace} into the central-average formula and dividing by the common factor $f^\gamma$ proves Theorem~\ref{thm:branching}.

The reciprocal-content coefficient has classical seminormal and Jucys--Murphy antecedents; our contribution is the explicit branching-path specialization as an immanant witness and its use in the finite frontier.

\subsection{Strict enlargement of the central cone}
Take
\[
 \beta=(4,4,4,1,1),\qquad \gamma=(4,4,3,1,1).
\]
The removed content is $1$ and the addable contents are $4,-2,-5$.  The primitive raw witness is
\[
 R=2d_{(5,4,4,1,1)}-2d_{(4,4,4,2,1)}-d_{(4,4,4,1,1,1)}\ge0.
\]
The three dimensions are $100100,75075,50050$, giving the normalized form
\[
 4\dbar_{(5,4,4,1,1)}-3\dbar_{(4,4,4,2,1)}-\dbar_{(4,4,4,1,1,1)}\ge0.
\]
On these coordinates the central-cone separator takes values $3726,4911,2501$, hence the pairing is
\[
 4(3726)-3(4911)-2501=-2330<0.
\]
This witness is therefore outside the old central cone and is in fact term $120$ of the rectangle certificate.

\section{The order-fifteen rectangle and closure of the frontier}\label{sec:rect}
\subsection{The exact 125-witness certificate}
For the central terms, choose block sizes $p+q=15$, partitions $\alpha\vdash p$, $\beta\vdash q$, and $1\le k\le\min(p,q)$.  Define
\[
 F_{\alpha,\beta,k}(\nu)=
 \sum_{\sigma\in S_p,\pi\in S_q}
 \chi^\alpha(\sigma)\chi^\beta(\pi)
 \chi^\nu((\sigma\oplus\pi)\tau_k),
\]
and divide each selected nonzero integer vector by the positive greatest common divisor of its entries.  By \eqref{eq:centralcoeff}, this scaling preserves the nonnegative witness direction.

For a branching term $(\beta,\gamma)$ with sizes $14$ and $13$, start from the rational vector
\[
 \mathbf 1_{\nu\youngcovers\beta}
 \frac1{c(\nu/\beta)-c(\beta/\gamma)}.
\]
Clear denominators by a positive common multiple, then divide by the positive greatest common divisor of the resulting integer entries.
The certificate consists of $106$ central and $19$ branching terms.  Its labels, primitive coefficient vectors, and positive integer multipliers $z_i$ are stored in \texttt{anc/rectangle15/certificate.json}; the human-readable expansion is in \texttt{certificate.txt}.  The multiplier $M$ in \eqref{eq:rectcert} is a positive $115$-digit integer and need not be printed in the main text.

The final identity has exactly two nonzero coordinates:
\[
 (15): 6006M,
 \qquad (3,3,3,3,3):-M.
\]
The hook-length formula gives $f^{(3^5)}=6006$.  Since every witness is nonnegative on the full complex PSD cone and every multiplier is positive, \eqref{eq:rectcert} proves Theorem~\ref{thm:rect15}.

\subsection{What independent verification means here}
The supplied verifier reconstructs the $106$ central rows by exact character sums and the $19$ branching rows from content differences, then checks all $176$ partition coordinates by integer arithmetic.  It does not read the previously enumerated central cone.

A separate independent audit uses a substantially different character engine.  Rather than Murnaghan--Nakayama, it computes characters via Jacobi--Trudi expansion and cycle assignment to labelled block capacities; it computes dimensions recursively from Young branching rather than from the supplied hook code; and for the central sums it composes concrete permutations with the cross-block swaps on all fifteen labels rather than using the supplied marked-cycle-length formula.  This reconstruction verified all $125$ stored primitive vectors, every positive multiplier, every balance identity, and all $176$ final coordinates.  It also checked the branching trace formula directly in small orders.  Thus the final theorem does not depend on floating-point optimization used during discovery.

The independent central-cone regeneration is a separate audit step.  In the present release review, \texttt{cone\_audit.py} completed successfully, regenerating all $8112$ parameter labels, $1310$ zero labels, and $5398$ distinct nonzero primitive rays.  It checked the separator on every regenerated ray, obtaining minimum pairing $98$.  The earlier packaging-time timeout is preserved in the historical logs and is superseded by this completed run.

\subsection{Ordinary PDC through order fifteen}
We now prove Theorem~\ref{thm:through15}.  Pate proved the ordinary statement through $n\le13$ and identified the order-$14$ and order-$15$ frontier \cite{Pate1999}.  Theorem~\ref{thm:bridge14} supplies $(4,4,3,3)$, closing order fourteen.  Equation \eqref{eq:5433bridge} proves the remaining nonrectangular order-$15$ frontier shape $(5,4,3,3)$; \eqref{eq:44331} records its node-moving descendant.  Theorem~\ref{thm:rect15} proves $(3^5)$.  Hence every ordinary irreducible immanant of every order $n\le15$ satisfies permanental dominance.

\section{Node-moving closure and boundary compression}\label{sec:structure}
\subsection{Exact node-moving reachability}
Write a partition as
\[
 \lambda=\mu\cup(1^t),
\]
where every part of $\mu$ is at least $2$.  Pate's node-moving operation removes an outer box from a row of length greater than one and appends a new singleton row, preserving total size and weakly decreasing the normalized immanant.

\begin{lemma}[Reachability]\label{lem:reach}
Let $|\lambda|=|\alpha|$.  Then $\lambda=\mu\cup(1^t)$ is reachable from $\alpha$ by Pate node moves if and only if $\mu$ is a Young subdiagram of $\alpha$.
\end{lemma}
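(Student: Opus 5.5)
We characterize reachability through an invariant, namely the multiset of parts of length at least $2$ viewed as a diagram. For $\lambda=\mu\cup(1^t)$, write $\mu$ as the diagram made of the rows of $\lambda$ of length $\ge2$. A Pate node move removes an outer box from a row of length $r\ge2$ and creates a new singleton row. The effect on the non-singleton part is the following: if $r\ge3$, the row shrinks by one box and stays non-singleton; if $r=2$, the row becomes a singleton and drops out of $\mu$. In both cases the new non-singleton diagram is obtained from the old one by deleting one removable box of $\mu$, since an outer box of a row of length $\ge2$ is a corner of $\mu$. When $r=2$ we delete the last box and then also regard the leftover single box as leaving $\mu$, so $\mu$ actually loses a whole row of length $2$. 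The singletons only grow in number, and they do not interfere with $\mu$ because they occupy rows below all rows of length $\ge2$.

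\emph{Necessity.} Under each move the new $\mu'$ is a Young subdiagram of the old $\mu$: it either loses one corner box, or, in the $r=2$ case, loses a length-$2$ row, which is the bottom row of $\mu$ among rows of length $2$. After relabeling rows, the result is still contained in $\mu$. The containment relation is transitive. The initial non-singleton diagram of $\alpha$ is contained in $\alpha$. By induction on the number of moves, the non-singleton part of any reachable partition is a subdiagram of $\alpha$. One subtlety needs checking: after a row of length $2$ collapses, the remaining rows re-sort, and the diagram still sits inside the old one row by row. This holds because we remove the lowest row among equal-length rows.

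\emph{Sufficiency.} Suppose $\mu\subseteq\alpha$ with every part of $\mu$ at least $2$, and $|\lambda|=|\alpha|$, so $t=|\alpha|-|\mu|$. We construct a sequence of moves in two stages. First, we process the rows $i$ with $\mu_i\ge2$ (including $\mu_i=\alpha_i$). We shorten row $i$ of $\alpha$ one box at a time from $\alpha_i$ down to $\mu_i$. Each such step removes an outer box of a row of length $>\mu_i\ge2$, so it is a legal move, provided that box is a corner at that moment. To guarantee this, we process rows from bottom to top, or equivalently always remove from the lowest row still exceeding its target. Second, the rows $i$ with $\mu_i=0$ and $\alpha_i\ge1$ are reduced to singletons: while $\alpha_i\ge2$ we move boxes off, and the final single box is left as a singleton row. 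This is allowed since singleton rows are exactly what $\lambda$ permits. The row counts then match, because each move adds one singleton and the total size is conserved, which forces exactly $t$ singletons. In both stages the intermediate shapes must remain partitions. Working bottom-up ensures that every box removed is a corner.

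The main obstacle is the bookkeeping for rows of $\alpha$ that end as a singleton versus those that vanish. A row of $\alpha$ of length $1$ is already a singleton. A row with $\mu_i=0$ and $\alpha_i\ge2$ contributes one retained singleton plus $\alpha_i-1$ new ones. We must verify that the count $t$ matches and that the ordering of rows never breaks the partition condition. We expect to settle this with a careful bottom-to-top greedy order and a one-line count of singleton rows.
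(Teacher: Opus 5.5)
Your proof is correct and follows the same route as the paper. Necessity holds because a node move can only shrink the non-singleton diagram, and sufficiency comes from peeling the boxes of $\alpha$ outside $\mu$ off one corner at a time, each peeled box becoming a new singleton row. One small correction concerns the order of moves: the rule ``remove from the lowest row still exceeding its target'' must be applied globally, with target $1$ on the rows where $\mu_i=0$, so those rows are shortened before the rows with $\mu_i\ge2$. If you literally run your first stage before your second, it already fails for $\alpha=(3,3)$, $\mu=(2)$: the box $(1,3)$ is not a corner while row $2$ still has length $3$.
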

\begin{proof}
A node move can only shorten nonunit rows, so necessity is immediate.  Conversely remove the outer boxes of $\alpha$ outside $\mu$, always maintaining a partition, and turn each removed box into a singleton.  At the end the nonunit diagram is exactly $\mu$ and the remaining mass is in singleton rows.
\end{proof}

Combining Lemma~\ref{lem:reach} with Theorem~\ref{thm:longrow} yields an exact criterion.

\begin{proposition}[Node closure of the long-row theorem]\label{prop:nodeclosure}
Write $\lambda=(a,\rho,1^t)$ with $\rho$ having no singleton parts.  Apart from the trivial empty-tail cases, $\lambda$ lies in the node-moving closure of Theorem~\ref{thm:longrow} if and only if
\begin{equation}\label{eq:nodecriterion}
 a+t\ge |\rho|+C(\rho)-1.
\end{equation}
\end{proposition}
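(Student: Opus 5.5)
Throughout, ``the criterion'' means the hypothesis of Theorem~\ref{thm:longrow}: a partition $(m,\sigma)$ with nonempty tail $\sigma$ satisfies it when $m\ge\sigma_1$ and $m\ge|\sigma|+C(\sigma)-1$. The \emph{node-moving closure} is the set of partitions reachable by Pate node moves from some partition satisfying the criterion; these inherit PDC because node moves weakly decrease $\dbar$. The plan is to reduce membership in this closure to a question about Young subdiagrams via Lemma~\ref{lem:reach}, and then optimize over possible sources $\alpha$.

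By Lemma~\ref{lem:reach}, $\lambda=(a,\rho,1^t)$ has nonunit part $\mu=(a,\rho)$ (when $a\ge2$). Hence $\lambda$ is in the closure if and only if some $\alpha$ with $|\alpha|=a+|\rho|+t$ satisfies the criterion and contains $\mu$.

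\emph{Sufficiency.} Assume \eqref{eq:nodecriterion} holds. I would take the source $\alpha=(a+t,\rho)$. It contains $\mu$, it has the correct size, and its first row satisfies $a+t\ge a\ge\rho_1$. Its tail is $\rho$, so the criterion for $\alpha$ is exactly \eqref{eq:nodecriterion}. Lemma~\ref{lem:reach} then shows that $\lambda$ is reachable from $\alpha$.

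\emph{Necessity.} Write a candidate source as $\alpha=(m,\sigma)$ with $\mu\subseteq\alpha$. Then $\sigma\supseteq\rho$ and $m\ge a$. Set $s=|\sigma|-|\rho|\ge0$, so that $m=a+t-s$. The criterion for $\alpha$ reads $a+t-s\ge|\rho|+s+C(\sigma)-1$. It therefore suffices to prove the monotonicity estimate
\[
C(\sigma)\ge C(\rho)-2s \quad\text{whenever } \rho\subseteq\sigma,\ |\sigma/\rho|=s,
\]
since substituting it into the criterion gives \eqref{eq:nodecriterion}.

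I would prove this estimate one box at a time. Adding one box can destroy the removable corner of maximal content $C$. If it does so, the box was added directly below that corner (the new box sits in the next row, making that corner non-removable), or directly right of it (the new box itself becomes removable with content $C+1$, so no loss occurs). In the first case the new box is removable with content $C-1$, so the maximum drops by at most one per box. This yields the stronger bound $C(\sigma)\ge C(\rho)-s$, and hence $m\ge|\rho|+C(\rho)-1+ (s - s) = |\rho|+C(\rho)-1$ after using $m=a+t-s$ carefully. The bookkeeping of $s$ needs care: the source pays $s$ in the first row and gains $s$ in $|\sigma|$. I expect the main obstacle to be checking precisely that this trade-off never helps.

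The remaining cases are the degenerate ones. When $\sigma$ might be empty, $\alpha=(n)$ is the permanent, and every $\lambda$ reachable from it has $\rho$ empty. When $\rho$ is empty, or $a=1$, one is in the ``trivial empty-tail cases'' that the statement excludes.
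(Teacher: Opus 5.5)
Your proposal is correct and follows essentially the paper's own argument. Sufficiency goes through the seed $(a+t,\rho)$, and necessity combines Lemma~\ref{lem:reach} with the bound $C(\sigma)\ge C(\rho)-s$, which the paper only asserts and you justify box by box (the paper also keeps the seed's singleton rows separate as $(A,\sigma,1^u)$ rather than absorbing them into $\sigma$). The ``obstacle'' you anticipate at the end is already settled by your own reduction: that bound gives $a+t\ge|\rho|+C(\rho)-1+s$, so the trade-off only strengthens the conclusion.
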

\begin{proof}
Sufficiency follows by starting at $(a+t,\rho)$ and moving $t$ boxes from the first row to singleton rows.  For necessity, suppose $\lambda$ descends from a long-row seed $(A,\sigma,1^u)$ with $\sigma\supseteq\rho$ and $s=|\sigma|-|\rho|$.  Total size gives $A=a+t-s-u$.  The seed inequality implies
\[
 A\ge |\sigma|+u+C(\sigma)-1.
\]
Removing $s$ boxes can increase the maximum removable content by at most $s$, hence $C(\sigma)\ge C(\rho)-s$.  Substitution gives \eqref{eq:nodecriterion}.
\end{proof}

This is a fixed-order reachability statement; it is not an assertion that normalized-immanant dominance is monotone under arbitrary box addition.

\subsection{Boundary compression}
The most general local partial-swap filter still has strong finite-memory structure.  Fix block sizes $p,q$ and swap size $k$, with the $k$ marked positions at the end of each block.  Let $X=aa^*\succeq0$ and $Y=bb^*\succeq0$ be arbitrary positive elements of the local group $C^*$-algebras.  By cyclicity, the coefficient vector depends on
\[
 \Tr\bigl(\tau_k(X\otimes Y)\bigr).
\]
The swap commutes separately with $S_{p-k}$ and $S_{q-k}$ acting on unmarked positions.  Averaging $X$ and $Y$ by these subgroups preserves positivity and all coefficients.

\begin{theorem}[Boundary compression]\label{thm:compression}
In a local irreducible $S^\alpha$,
\[
 S^\alpha\downarrow_{S_{p-k}}
 =\bigoplus_{\gamma\vdash p-k}S^\gamma\otimes M_{\alpha/\gamma}.
\]
For the purpose of all partial-swap coefficient vectors it suffices to replace $X$ by
\begin{equation}\label{eq:compressedX}
 X_\alpha=\bigoplus_\gamma I_{S^\gamma}\otimes X_{\alpha/\gamma},
 \qquad X_{\alpha/\gamma}\succeq0,
\end{equation}
and similarly in the other block.  Moreover
\[
 \dim M_{\alpha/\gamma}=f^{\alpha/\gamma}\le k!.
\]
At the level of cone generators one may restrict further to one local irreducible pair, one predecessor pair, and rank-one positive matrices in the two multiplicity spaces.
\end{theorem}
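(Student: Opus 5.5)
The plan is to reduce everything to the single trace $\Tr_{S^\nu}\bigl(\tau_k(X\otimes Y)\bigr)$, which by cyclicity determines the coefficient vector of the witness $W=(a\otimes b)^*\tau_k(a\otimes b)$ through $t_\nu=\Tr_{S^\nu}(\tau_k(X\otimes Y))$ with $X=aa^*$, $Y=bb^*$. Here $X$ and $Y$ are regarded as elements of the local algebras acting in the induced representation. Since $t_\nu$ is linear in $X$ and in $Y$ separately, I will decompose $X$ along the Wedderburn decomposition $\C[S_p]\cong\bigoplus_\alpha\operatorname{End}(S^\alpha)$. Off-diagonal blocks do not contribute to the trace: restricting $S^\nu$ to $S_p\times S_q$ gives isotypic pieces $S^\alpha\otimes S^\beta\otimes\C^{c^\nu_{\alpha\beta}}$, and $X$ acts blockwise in $\alpha$. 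Hence $X$ may be taken block diagonal, $X=\bigoplus_\alpha X^{(\alpha)}$ with $X^{(\alpha)}\succeq0$. By linearity, one local irreducible pair $(\alpha,\beta)$ suffices at the level of cone generators.

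The first main step is averaging over the unmarked subgroup. The unmarked positions $1,\dots,p-k$ and their counterparts in the second block commute with $\tau_k$, since $\tau_k$ moves only marked positions. So for $g\in S_{p-k}$,
\[
\Tr\bigl(\tau_k(gXg^{-1}\otimes Y)\bigr)=\Tr\bigl(\tau_k(X\otimes Y)\bigr).
\]
This uses conjugation by $g\otimes 1$, which commutes with $\tau_k$. Replacing $X$ by its $S_{p-k}$-average $\bar X$ therefore preserves every $t_\nu$ and preserves positivity, because it is a convex combination of unitary conjugates. Then $\bar X$ lies in the commutant of $S_{p-k}$ in $\operatorname{End}(S^\alpha)$. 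Using the restriction $S^\alpha\downarrow_{S_{p-k}}=\bigoplus_\gamma S^\gamma\otimes M_{\alpha/\gamma}$ and Schur's lemma, this commutant is $\bigoplus_\gamma I_{S^\gamma}\otimes\operatorname{End}(M_{\alpha/\gamma})$, and positivity forces each $X_{\alpha/\gamma}\succeq0$. This gives \eqref{eq:compressedX}; the second block is handled identically.

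The multiplicity space $M_{\alpha/\gamma}$ is spanned by Young paths from $\gamma$ to $\alpha$, i.e.\ standard skew tableaux of shape $\alpha/\gamma$ with $k$ boxes. So its dimension is $f^{\alpha/\gamma}$. Since $f^{\alpha/\gamma}$ is at most the number of orderings of $k$ labelled boxes, we get $f^{\alpha/\gamma}\le k!$. Alternatively, $S^\alpha$ is a summand of $\operatorname{Ind}_{S_{p-k}\times S_k}^{S_p}$, and the multiplicity equals $\sum_\mu c^\alpha_{\gamma\mu}f^\mu\le\dim\C[S_k]$.

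For the final generator reduction, $t_\nu$ is linear in $X$, so the direct sum over $\gamma$ splits into summands $I\otimes X_{\alpha/\gamma}$, one predecessor at a time. Each $X_{\alpha/\gamma}\succeq0$ is a sum of rank-one projectors $ww^*$, so the cone generated is already generated by rank-one choices in each multiplicity space, one pair $(\gamma,\gamma')$ at a time.

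The step I expect to be the main obstacle is justifying that these compressed generators are still legitimate witnesses. One must check that $I_{S^\gamma}\otimes ww^*$, an element of the block $\operatorname{End}(S^\alpha)\subset\C[S_p]$, is of the form $aa^*$. This holds because every positive element of a finite-dimensional $C^*$-algebra is a square $aa^*$ with $a$ its square root. With that, Proposition~\ref{prop:partialswap} applies verbatim, so nothing is lost in either direction and the compressed family generates exactly the same cone.
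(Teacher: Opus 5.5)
Your proposal is correct and follows essentially the same route as the paper. You average $X$ over the unmarked $S_{p-k}$, which commutes with $\tau_k$; apply Schur's lemma to the restriction to obtain \eqref{eq:compressedX}; use square roots in the finite group $C^*$-algebra for the converse; count standard skew tableaux for $f^{\alpha/\gamma}\le k!$; and use bilinearity with spectral decomposition for rank-one generators. You also make explicit the Wedderburn reduction to a single local irreducible pair, which the paper leaves implicit. Your alternative induced-module bound $\sum_\mu c^\alpha_{\gamma\mu}f^\mu\le\dim\C[S_k]$ is asserted rather than derived, but the tableau count already suffices.
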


\begin{proof}
Average $X$ over conjugation by $S_{p-k}$.  Schur's lemma applied to the restriction decomposition gives precisely \eqref{eq:compressedX}, with each multiplicity block equal, up to normalization, to a positive partial trace over $S^\gamma$.  Conversely every operator of this form lies in the local finite group $C^*$-algebra and has a square root there, so the averaging loses no cone directions.  The multiplicity space is indexed by standard tableaux of the $k$-box skew shape $\alpha/\gamma$, hence its dimension is at most $k!$.  Bilinearity and spectral decomposition reduce the cone generators to rank-one multiplicity blocks.
\end{proof}

Thus at fixed $k$, arbitrarily deep local Young paths carry no additional information beyond the final $k$ branching steps.  The number of possible boundary shapes still grows with $n$, so this is not a finite global classification.

\subsection{Exact saturation at one swap}
When $p=n-1$ and $q=k=1$, branching is multiplicity free and every retained boundary block is one-dimensional.

\begin{theorem}[$k=1$ singleton-block saturation]\label{thm:saturation}
The cone generated by arbitrary local filters in the $(n-1,1)$ one-swap architecture is exactly the cone generated by the branching-refined witnesses $B_{\beta,\gamma}$ of Theorem~\ref{thm:branching}.
\end{theorem}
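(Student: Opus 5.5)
The plan is to apply Theorem~\ref{thm:compression} with $p=n-1$, $q=k=1$, and then read off the resulting trace directly from the content formula \eqref{eq:branchtrace}.

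\textbf{Step 1: compress the large block.} The small block is $\C[S_1]=\C$, so $Y$ is a positive scalar and can be dropped. In the large block, $X=aa^*$ lies in $\C[S_{n-1}]=\bigoplus_\beta \operatorname{End}(S^\beta)$. By bilinearity of the coefficient map, a cone generator may be taken to live in a single summand $S^\beta$. Theorem~\ref{thm:compression} then applies with $k=1$ and $S^\beta\downarrow_{S_{n-2}}=\bigoplus_{\gamma\youngcovered\beta}S^\gamma$. Each multiplicity space $M_{\beta/\gamma}$ has dimension $f^{\beta/\gamma}=1$, so $X$ may be replaced, without changing any coefficient, by
\[
 X_\beta=\sum_{\gamma\youngcovered\beta}x_\gamma\, e^{(n-1)}_\beta e^{(n-2)}_\gamma,\qquad x_\gamma\ge0 .
\]
Here $e^{(n-1)}_\beta e^{(n-2)}_\gamma$ is exactly the projection $b$ of Section~\ref{sec:branching}.

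\textbf{Step 2: identify the coefficient vector.} By cyclicity the coefficient vector of a filter is $t_\nu=\Tr_{S^\nu}(sX_\beta)$. Linearity in $X$ gives
\[
 t_\nu=\sum_\gamma x_\gamma\Tr_{S^\nu}(sb_{\beta,\gamma})=\sum_\gamma x_\gamma\Tr_{S^\nu}(b_{\beta,\gamma}sb_{\beta,\gamma}).
\]
Applying \eqref{eq:branchtrace}, this equals
\[
 t_\nu=\sum_\gamma x_\gamma f^\gamma\,\frac{\mathbf 1_{\nu\youngcovers\beta}}{c(\nu/\beta)-c(\beta/\gamma)} .
\]
So every generator's vector is a nonnegative combination $\sum_\gamma x_\gamma f^\gamma B_{\beta,\gamma}$, and the arbitrary-filter cone is contained in the cone spanned by the $B_{\beta,\gamma}$.

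\textbf{Step 3: the reverse inclusion.} Each $B_{\beta,\gamma}$ is itself realized by the filter $a=b_{\beta,\gamma}$, since $b$ is an orthogonal projection in the local algebra (it commutes and is self-adjoint idempotent, as noted in Section~\ref{sec:branching}). The witness is then $W=bsb$, whose coefficient vector is $f^\gamma B_{\beta,\gamma}$, a positive multiple. This gives equality of the two cones.

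\textbf{Main obstacle.} The delicate point is Step~1: the claim that averaging $X$ over $S_{n-2}$ conjugation loses no cone directions and preserves the coefficient vector. This requires that $s=(n-1,n)$ commutes with $S_{n-2}$, so that $\Tr(sgXg^{-1})=\Tr(sX)$. It also requires that the averaged element still has the form $a'a'^*$ inside the local algebra. Both facts are supplied by Theorem~\ref{thm:compression}. It then remains only to check that the cross terms between distinct $\gamma$ vanish after averaging, which holds because $e_\gamma^{(n-2)}$ is central in $\C[S_{n-2}]$.
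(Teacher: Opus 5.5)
Your proposal is correct and follows essentially the same route as the paper. You compress the $(n-1)$-block via Theorem~\ref{thm:compression}, use multiplicity-free branching to reduce an arbitrary positive filter to nonnegative weights on the projectors $e_\beta e_\gamma$ (the paper's $w_{\beta,\gamma}=\Tr(Q_{\beta,\gamma}X_\beta)$ is your $x_\gamma f^\gamma$), evaluate the swap with the reciprocal-content trace \eqref{eq:branchtrace}, and get the reverse inclusion by realizing each $B_{\beta,\gamma}$ with the filter $e_\beta e_\gamma$.

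One minor imprecision: the cross terms between distinct $\gamma$ do not vanish because $e_\gamma^{(n-2)}$ is central in $\C[S_{n-2}]$. They vanish because the averaged $X$ (or $s$) commutes with $\C[S_{n-2}]$, together with the orthogonality of the idempotents $e_\gamma^{(n-2)}$. This is already built into the block-diagonal form \eqref{eq:compressedX}.
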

\begin{proof}
In the local $S^\beta$ block let $Q_{\beta,\gamma}$ be the projector onto the $S_{n-2}$ type $\gamma$.  By Theorem~\ref{thm:compression}, an arbitrary positive filter contributes only nonnegative weights
\[
 w_{\beta,\gamma}=\Tr(Q_{\beta,\gamma}X_\beta)\ge0.
\]
The compression of $s=(n-1,n)$ on the path $\gamma\youngcovered\beta\youngcovered\nu$ is, by the Jucys--Murphy calculation,
\[
 \frac1{c(\nu/\beta)-c(\beta/\gamma)}.
\]
Hence every coefficient vector is a positive combination of $B_{\beta,\gamma}$.  Conversely each $B_{\beta,\gamma}$ is realized, up to the positive factor $f^\gamma$, by the filter $e_\beta e_\gamma$.
\end{proof}

Retaining longer paths cannot improve this particular architecture.  The theorem says nothing comparable about $k\ge2$, where nontrivial multiplicity spaces and off-diagonal coherent information may survive.

\section{Asymptotic limitations of the explicit family program}\label{sec:density}
Let $\mathcal S$ be the node-moving closure of the following explicit seeds: Pate's class; the family $(m,4,3,3)$; the finite results through order fifteen and the available order-$17$ exceptional certificate; Theorem~\ref{thm:longrow}; and Theorem~\ref{thm:ab33}.  This is a deliberately specified theorem system, not the closure of every inequality in the Pate literature or every possible $W$-function.

For fixed order, the exact node predicates separate four-large-row exceptions from the partitions with at least five parts greater than two.  The ancillary structural verifier checks these analytic predicates against graph-search node reachability for every partition of orders $16$ through $30$, totaling $27,945$ partitions.

\begin{theorem}[Density zero]\label{thm:density}
Let $\mathcal S_n=\{\lambda\in\mathcal S:|\lambda|=n\}$ and let $p(n)$ be the partition number.  Then
\[
 \frac{|\mathcal S_n|}{p(n)}\longrightarrow0.
\]
More quantitatively,
\begin{equation}\label{eq:densitybound}
 \frac{|\mathcal S_n|}{p(n)}
 \le\exp\left[-\pi\left(\sqrt{\frac23}-\frac23\right)\sqrt n+O(\log n)\right].
\end{equation}
The same conclusion holds even if one optimistically grants PDC for every rectangular-tail shape $(a,k^r,2^s,1^t)$ and all of its node-moving descendants.
\end{theorem}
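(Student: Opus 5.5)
The plan is to replace $\mathcal S_n$ by an explicit superset that is easy to count. Every seed family in the definition of $\mathcal S$ has a rigid ``large-row'' structure, and node moves interact with that structure only through Lemma~\ref{lem:reach}. So I will first describe all node-moving descendants in terms of shapes, then count them crudely, and finally compare with the Hardy--Ramanujan asymptotic $\log p(n)=\pi\sqrt{2n/3}+O(\log n)$.

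\textbf{Step 1: containment in box-constrained shapes.} For each seed class I read off a containment using Lemma~\ref{lem:reach}.
\begin{itemize}[label=--]
\item Pate's class $(p,q^u,r^v,2^s,1^t)$ with $u\le2$, $v\le1$, the family $(m,4,3,3)$, Theorem~\ref{thm:ab33}, and the finitely many sporadic certificates all have at most four parts exceeding $2$. A descendant $\lambda=\mu\cup(1^t)$ has $\mu$ inside the seed, so it has the same property.
\item For Theorem~\ref{thm:longrow}, Proposition~\ref{prop:nodeclosure} already gives the exact criterion $a+t\ge|\rho|+C(\rho)-1$. I will simply record that its non-first-row, non-singleton part $\rho$ is confined to a box of area at most $n$.
\item For the optimistic enlargement I use the seed $(a,k^r,2^s,1^u)$ of size $n$. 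Every descendant has $\lambda_2\le k$, and the parts of $\lambda$ beyond the first that exceed $2$ fit inside the $k\times r$ rectangle with $kr\le n$.
\end{itemize}
Hence every $\lambda\in\mathcal S_n$, under all hypotheses including the optimistic one, has the following shape: a first row, then a partition $\pi$ fitting in a $k\times r$ box with $kr\le n$, then some parts equal to $2$, then some parts equal to $1$.

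\textbf{Step 2: counting.} The first row, the number of $2$'s and the number of $1$'s contribute at most a polynomial factor $O(n^3)$; in fact the total size already fixes one of them. Partitions fitting in a $k\times r$ box number $\binom{k+r}{k}$, and the admissible pairs $(k,r)$ number $O(n\log n)$. So
\[
|\mathcal S_n|\le n^{O(1)}\max_{kr\le n}\binom{k+r}{k}.
\]
Among pairs with $kr\le n$, the binomial coefficient is maximized near $k=r=\sqrt n$, giving at most $4^{\sqrt n}=\exp\bigl((2\log2)\sqrt n\bigr)$. If I want exactly the constant in \eqref{eq:densitybound}, I will instead bound box-constrained partitions of each $m\le n$ via the generating function $\prod_{i\le k}(1-x^i)^{-1}$ with a saddle-point estimate. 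That route gives $\exp\bigl(\tfrac{2\pi}{3}\sqrt n+O(\log n)\bigr)$.

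\textbf{Step 3: comparison.} Either constant is smaller than $\pi\sqrt{2/3}$, since $2\log2<\tfrac{2\pi}{3}<\pi\sqrt{2/3}$. Dividing by $p(n)$ then yields \eqref{eq:densitybound}, and in particular density zero.

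\textbf{Main obstacle.} I expect the real difficulty to be Step 1: checking that the definition of $\mathcal S$ (node-moving closure of the full seed list, including the order-$17$ exceptional certificate and the two-parameter region) really stays inside the box-constrained shape class. The point to verify is that node moves never create new large rows, only singletons, which is exactly what Lemma~\ref{lem:reach} guarantees. A second, minor point is whether the stated constant requires the sharper saddle-point estimate. I would try the crude $\binom{k+r}{k}$ bound first, and only if the paper's displayed constant is needed would I refine it.
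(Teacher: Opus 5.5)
\textbf{The gap is in Step~1, in the bullet on the long-row class.} This is the heart of the theorem, not a bookkeeping detail. The node criterion \eqref{eq:nodecriterion} constrains only $|\rho|$ and $C(\rho)$. It does not place $\rho$, or even its parts exceeding $2$, in a box of area at most $n$.

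For example, $(5h-2,h,3^h)$ is itself a seed of Theorem~\ref{thm:longrow}. Here $\rho=(h,3^h)$ and $C(\rho)=h-1$, so $m=5h-2=q+C(\rho)-1$, and the total size is $9h-2$. Yet its tail has bounding box $h\times(h+1)$, whose area exceeds $9h-2$ once $h\ge8$.

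Worse, the conclusion of your Step~2 is false. Take $q=\lfloor n/3\rfloor$. Then every $\rho\vdash q$ gives a seed $(n-q,\rho)$, because $n-2q\ge q\ge\rho_1$ implies $n-q\ge q+\rho_1-1\ge q+C(\rho)-1$. Hence $|\mathcal S_n|\ge p(\lfloor n/3\rfloor)=\exp\bigl(\tfrac{\pi\sqrt2}{3}\sqrt n+O(\log n)\bigr)$. Since $\tfrac{\pi\sqrt2}{3}\approx1.48>2\log2\approx1.39$, no bound of the form $n^{O(1)}4^{\sqrt n}$ can hold. So no repair of the box bookkeeping will rescue the argument for this class.

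\textbf{The missing idea is that the node criterion controls the size of the tail, not its shape.} Every part of $\rho$ is at least $2$, so its length satisfies $\ell\le q/2$. The bottom removable box therefore has content $\rho_\ell-\ell\ge2-q/2$, which gives $C(\rho)\ge2-q/2$. Then \eqref{eq:nodecriterion} yields $n-q=a+t\ge q/2+1$, that is, $q\le2(n-1)/3$.

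The long-row descendants are then at most $(n+1)^2p(\lfloor2n/3\rfloor)$ in number. Hardy--Ramanujan gives the exponent $\pi\sqrt{2(2n/3)/3}=\tfrac{2\pi}{3}\sqrt n$, and this is exactly where the constant $\sqrt{2/3}-2/3$ in \eqref{eq:densitybound} comes from.

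Your saddle-point remark lands on the same number for the wrong reason. Partitions fitting in a $k\times r$ box with $kr\le n$ number at most $4^{\sqrt n}$ in total, so no estimate for them could produce $\tfrac{2\pi}{3}$. In any case, a smaller exponent would only strengthen \eqref{eq:densitybound}; it would not require refinement.

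Your box count is the right tool for the optimistic rectangular-tail addition, matching the paper. The seeds with at most four parts exceeding $2$ are indeed polynomially many.

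One small correction: the finite-order seeds need not have at most four large parts. These are all partitions of order at most $15$, including $(3^5)$, together with the order-$17$ certificate. They are harmless only because node moves preserve size, so they contribute nothing once $n>17$.
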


\begin{proof}
All fixed-order exceptions disappear asymptotically.  Except for the long-row class and the optimistic rectangular-tail addition, the explicit seeds and their node descendants have at most four parts exceeding $2$; the number of such partitions is polynomial in $n$.

For a node descendant of a long-row seed write $\lambda=(a,\rho,1^t)$ with $q=|\rho|$ and all parts of $\rho$ at least $2$.  If $\ell$ is the length of $\rho$, the bottom removable content is at least $2-\ell\ge2-q/2$.  The exact node criterion \eqref{eq:nodecriterion} implies
\[
 n-q=a+t\ge q+C(\rho)-1\ge q/2+1,
\]
so $q\le2(n-1)/3$.  Consequently
\[
 |\mathcal S_n|\le O(n^6)+(n+1)^2p(\lfloor2n/3\rfloor).
\]
The Hardy--Ramanujan asymptotic \cite{HardyRamanujan1918}
\[
 p(n)=\exp\left(\pi\sqrt{\frac{2n}{3}}+O(\log n)\right)
\]
gives \eqref{eq:densitybound}.

For the optimistic rectangular-tail addition, the nonunit subdiagram below the first row and above rows of length two lies in a $k\times r$ rectangle.  It has at most $\binom{k+r}{k}$ possibilities.  The elementary estimate
\[
 \log\binom{k+r}{k}\le(1+\log2)\sqrt{kr}\le(1+\log2)\sqrt n
\]
has a smaller exponential constant than the Hardy--Ramanujan denominator, while all remaining parameters contribute only polynomial factors.  Hence the density still tends to zero.
\end{proof}

The interpretation must be kept narrow.  Theorem~\ref{thm:density} does not say that PDC fails generically, that unrestricted $W$-functions are insufficient, that fixed-$k$ higher-swap witnesses are insufficient, or that no finite list of general mechanisms could solve the conjecture.  It says that accumulating finitely many bounded-complexity Young-diagram families and then closing them under the specified node-moving operation cannot cover asymptotically most partitions.

For every $r\ge5$, a concrete uncovered diagram is
\[
 \Sigma_r=(r+2,r+1,\dots,3),\qquad |\Sigma_r|=r(r+5)/2,
\]
whose number of distinct row lengths, removable corners, and Durfee size all grow with $r$.

\section{Exact certificates, reproducibility, and formal verification}\label{sec:verification}
The finite computations in this paper serve as proof certificates, not numerical evidence.  Discovery may use linear programming or numerical experimentation, but each final finite claim is reduced to an exact identity and verified over integers or rationals.

\subsection{Order fourteen}
The order-$14$ ancillary verifier checks all twenty positive rational weights and all $135$ coordinates of the bridge.  It also reruns literal small-order character sums and an exact complex rank-$13$ stress test.  The universal PSD implication comes from the analytic tensor positivity proof, not from the stress test.

\subsection{General witnesses and the central cone}
The general verifier checks the order-$15$ seven-witness bridge, the all-parameter polynomial identity for $(m,4,3,3)$, exact additional certificates, exact ranks of the order-$14$ and order-$15$ central cones, and the integer separator for $(3^5)$.  Symmetric-block rows are independently compared with the Pieri--content finite-difference formula.

\subsection{The rectangle}
The rectangle verifier regenerates all $125$ primitive rows from their definitions and checks all $176$ coordinates of \eqref{eq:rectcert}.  The independent audit regenerates the same rows with a different character algorithm and a different concrete permutation composition routine.  These two paths agree exactly.

\subsection{The two-parameter family and structural enumeration}
The symbolic verifier regenerates the twenty Bernstein-polynomial families, checks the translated positivity certificates and exact finite grid, and treats the exceptional $b=7$ slice.  The structural verifier checks $1596$ long-row examples and threshold cases, $4752$ coefficient comparisons for the finite-difference formula, $90$ two-witness parameter instances, and all $27,945$ partitions in orders $16$ through $30$.  These finite checks support, but do not replace, the infinite algebraic and counting proofs.

\subsection{Formalization boundary}
The included Lean project concerns the order-$14$ bridge.  Its final source gives a kernel-checked construction of the relevant representation theory, tensor positivity, trace identification, exact twenty-row certificate, Gram factorization, and normalized bridge.  The final PDC statement takes four historical Pate inequalities as explicit theorem parameters.  In a separate final validation, \texttt{lake build Bridge Bridge.Young} completed successfully under Lean~4.19.0 with the pinned mathlib revision; the generated-source check, complete theorem audit, and final axiom audit also passed.  No claim is made here that the full project through order fifteen has been formalized.

The ancillary README records the exact scope and distinguishes the earlier packaging environment, in which Lean was unavailable, from this later successful final Lean rebuild.

\section{Concluding remarks}\label{sec:conclusion}
The historical finite frontier and the structural results point in the same direction.  Pate's broad positivity machine is sufficiently flexible to produce a large family of inequalities, but useful progress depends on choosing local filters that retain the right representation-theoretic information.  Central projectors and content finite differences already yield exact bridge certificates and uniform infinite PDC families.  Their order-$15$ central cone nonetheless has an exact separator at $(3^5)$.  Retaining one extra branching predecessor produces reciprocal-content witnesses that escape that cone and close the rectangle, giving ordinary irreducible-immanant permanental dominance through order fifteen.

The same viewpoint yields two structural delimiters.  First, in the $(n-1,1)$ one-swap architecture, the branching-refined generators are already complete: longer local Young paths do not enlarge the cone.  Second, finitely many bounded-complexity diagram families, even with the stated node-moving closure, have density zero among all partitions.  Neither statement is a limitation on higher-swap coherent filters.

The next natural level is $k=2$.  Two-box branching can have nontrivial multiplicity spaces, and Theorem~\ref{thm:compression} leaves positive matrices of order at most two rather than scalars.  Thus genuinely off-diagonal coherent boundary information can survive.  We do not pursue this higher-level cone here.  Its systematic study may become worthwhile with stronger computational resources or in future work by other researchers.

\appendix
\section{Fresh verification summary}
The deterministic Python verifiers listed below were executed in the final packaging environment; the Lean line records a separate final validation using the pinned toolchain.  A ``pass'' means that the stated command terminated successfully in the indicated environment; archived earlier logs are not promoted to fresh passes.

\begin{center}\small
\begin{tabular}{p{0.36\textwidth}p{0.49\textwidth}}
\toprule
Check & Fresh result\\
\midrule
Order-$14$ certificate & PASS: twenty witnesses, positive weights, all $135$ coordinates, adversarial exact checks.\\
General witness verifier & PASS: order-$15$ nonrectangular bridge, uniform family identities, cone ranks, separator.\\
Order-$15$ rectangle verifier & PASS: $106+19$ rows and all $176$ coordinates.\\
Independent rectangle reconstruction & PASS: independent characters, dimensions, $125$ rows, all $176$ coordinates.\\
Independent full central-cone rerun & PASS: all $8112$ labels, $1310$ zeros, $5398$ nonzero primitive rays, and the separator.\\
Two-parameter symbolic verifier & PASS: twenty Bernstein identities and complete exact sign certificate.\\
Structural verifier & PASS: long-row checks, node-closure enumeration, and all stated finite records.\\
Lean rebuild & PASS: Lean~4.19.0; full \texttt{Bridge.Young} build plus generated-source, theorem, and axiom audits.  Scope: order-$14$ bridge and conditional $(4,4,3,3)$ PDC transfer.\\
\bottomrule
\end{tabular}
\end{center}

\section{Historical and novelty boundary}
The following distinctions guide the claims in the paper.
\begin{enumerate}[label=(\roman*)]
\item Pate's 1997 work is credited for the general $W$-function/local-filter/cross-block-transposition positivity architecture and central averaging.
\item Pate's 1999 work is credited for the ordinary-immanant frontier through $n\le13$, for the explicit broad partition classes used as external PDC inputs, and for identifying the remaining order-$14$ and order-$15$ shapes.
\item Young branching, Jucys--Murphy content eigenvalues, hook lengths, and seminormal reciprocal-content phenomena are classical representation theory; see, for example, Vershik--Okounkov \cite{VershikOkounkov2005}.
\item The contributions claimed here are the explicit computable witness specializations used in the proofs, the exact bridges and uniform families, the exact central-cone obstruction, the branching-refined immanant witness formulation and its rectangle certificate, the boundary-compression and one-swap saturation theorems, and the exact node/density results.
\item No absolute claim of priority is made for every closed-form coefficient formula.  The priority audit in the reproducibility package records the conservative boundary adopted for submission.
\end{enumerate}

\bibliographystyle{abbrv}
\bibliography{references}
\end{document}